\newtheorem{teo}{Theorem}
\newtheorem{pro}{Proposition}
\newtheorem{lem}{Lemma}
\newtheorem{cor}{Corollary}
\newtheorem*{rem}{Remark}
\title{Relating moments of self-adjoint polynomials in two orthogonal projections }
\author{Nizar Demni}
\address{ Aix-Marseille Universit\'e CNRS Centrale
Marseille I2M - UMR 7373. 39 rue F. Joliot Curie, 13453 Marseille, France }
\email{nizar.demni@univ-amu.fr}
\author[T. Hamdi]{Tarek Hamdi}
\address{Department of Management Information Systems \\ College of Business Management \\ Qassim University \\ Ar Rass \\ Saudi Arabia
and Laboratoire d'Analyse Math\'ematiques et applications LR11ES11 \\ Universit\'e de Tunis El-Manar \\ Tunisie}
\email{ t.hamdi@qu.edu.sa } 
\keywords{Orthogonal projections; Orthogonal symmetries; commutator; Free unitary Brownian motion, Free Jacobi process, Lucas sequence, Kato's dual pair.} 
\begin{document}
\maketitle

\begin{abstract} 
Given two orthogonal projections $\{P,Q\}$ in a non commutative tracial probability space, we prove relations between the moments of $P+Q$, of $\sqrt{-1}(PQ-QP)$ and of $P+QPQ$ and those of the angle operator $PQP$. Our proofs are purely algebraic and enumerative and does not assume $P,Q$ satisfying Voiculescu's freeness property or being in general position. As far as the sum and the commutator are concerned, the obtained relations follow from binomial-type formulas satisfied by the orthogonal symmetries associated to $P$ and $Q$ together with the trace property. In this respect, they extend those corresponding to the cases where one of the two projections is rotated by a free Haar unitary operator or more generally by a free unitary Brownian motion. As to the operator $P+QPQ$, we derive autonomous recurrence relations for the coefficients (double sequence) of the expansion of its moments as linear combinations of those of $PQP$ and determine explicitly few of them. These relations are obtained after a careful analysis of the structure of words in the alphabet $\{P, QPQ\}$. We close the paper by exploring the connection of our previous results to the so-called Kato's dual pair. Doing so leads to new identities satisfied by their moments. 
\end{abstract}

%\tableofcontents

\section{Introduction}
Let $(\mathcal{A}, \tau)$ be a non commutative probability space and assume $\tau$ is a trace. Consider two orthogonal (self-adjoint) projections $P, Q \in \mathcal{A}$ and assume they are free in Voiculescu's sense. Then the spectral distributions of their sum and of their self-adjoint product $PQP$ (angle operator) are the free additive and the free multiplicative convolutions of Bernoulli distributions (\cite{Nic-Spe}). As to the spectral distribution of their self-adjoint commutator 
\begin{equation*}
C:= i(PQ-QP),
\end{equation*}
it was computed in \cite{NS} and related there to that of $PQP$ relying on combinatorics of free cumulants. 

If $P$ and $Q$ are not necessarily free, one may obtain two free projections by rotating for instance $Q$ by a Haar unitary operator $U$, that is by considering $P$ and $UQU^{\star}$. A concrete realisation of this property amounts to consider orthogonal projections with convergent ranks in a finite dimensional complex vector space, to rotate one of them by a Haar unitary matrix and to let the matrix size tend to infinity. Doing so has the merit to allow for the use of Jordan (or principal) angles, as shown for instance in \cite{Aub} (see also \cite{AHT}, \cite{Oml}, \cite{Gal}, \cite{Bak-Tre0}, \cite{Bak-Tre1} and references therein). In particular, the spectra of the sum and of the self-adjoint commutator of the underlying matrix orthogonal projections are related to the spectrum of their angle operator through elementary functions. Then, using the asymptotic freeness property, one carries these relations to the densities of the spectral distributions of the corresponding self-adjoint operators. As to the operator $P + QPQ$, the situation becomes quite difficult even under the freeness assumption since $P$ and $QPQ$ are no longer free. This level of difficulty is for instance transparent from the complicated expression density of its spectral distribution and its support is disconnected. 

Relaxing the freeness assumption, one may ask whether the relations alluded to above remain valid when $P$ and $Q$ are in general position. Recall (see e.g \cite{Hal}) that this property means that the four intersections of the closed subspaces corresponding to $P$ and $Q$ and of their orthogonal complements are trivial. In this case, Theorem 2 in \cite{Hal} provides a unitary representation of $\{P,Q\}$ in the space of two-by-two matrices with coefficients in some Hilbert space of bounded operators. Endowing this matrix algebra with the state $(1/2)\tau \otimes \textrm{Tr}$, we can study spectral distributions of self-adjoint polynomials in $\{P,Q\}$. For instance, the moment sequences of $PQP$ and of $(P+Q-{\bf 1})^2$ coincide up to a factor $2$ (see \cite{Hal}, p. 386), where ${\bf 1}$ is the unit of $\mathcal{A}$. Moreover, if $\tau(P) = \tau(Q) = 1/2$ then this equality between moment sequences says that the spectral distribution of $(P+Q- {\bf 1})^2$ in $(\mathcal{A}, \tau)$ is the same as the one of $PQP$ in the compressed space $(P\mathcal{A}P, 2\tau)$. In particular, the general position property holds true for the orthogonal projections $P$ and $U_tQU_t^{\star}, t > 0,$ with $\tau(P)= \tau(Q) = 1/2$, where $(U_t)_{t \geq 0}$ is a free unitary Brownian motion in $\mathcal{A}$ (\cite{Bia}) which is free from $\{P,Q\}$ (\cite{IU}, Remark 3.5). In this case, the angle process is referred to as the liberation process in $(\mathcal{A}, \tau)$ or as the free Jacobi process  $\{P\mathcal{A}P, 2\tau)$. Besides, the spectral distribution of the latter at any time $t > 0$ may be described through the real part of $U_{2t}$ up to an affine transformation (\cite{DHH}, \cite{IU}, remark 3.4). In a nutshell, one obtains an equality between the spectral distributions of the free Jacobi process in the compressed space and of its additive version in $(\mathcal{A}, \tau)$. Using the terminology of \cite{BL}, one obtains a relation between the multiplicative and the additive $t$-free convolution of $P$ and $Q$ (though we do not require these operators to be classically independent as in \cite{BL}).

%These angles have their origin in the study of Grassmannians in finite dimensional spaces (see e.g. \cite{Ner} and references therein). Moreover,  Halmos two projections Theorem may be seen as an infinite dimensional extension of them though it holds when $P$ and $Q$ are in general position (\cite{Hal}). 

In this paper, we relate the moments of $P+Q$, of $C$ and of $P+QPQ$ to those of $PQP$ for arbitrary orthogonal projections, without any freeness or general position assumption. Though our proofs depend on the trace property satisfied by $\tau$, they are purely algebraic and enumerative. Actually, as far as $P+Q$ and $C$ are concerned, they rely on binomial-type formulas satisfied by the orthogonal symmetries associated to $\{P, Q\}$, namely:
\begin{equation*}
R := 2P-{\bf 1}, \quad S := 2Q-{\bf 1}.
\end{equation*}
Doing so carries our problem into relating the moments of $R+S$ and of $i(RS-SR)$ to those of $RS$ which results in simpler computations since $R$ and $S$ are involutions. When applied to free orthogonal projections or to $P$ and $U_tQU_t^{\star}$ with 
$\tau(R) = \tau(S) = 0$, our obtained relations reduce to known results such as Theorem 3.10 in \cite{BL} and Corollary 2 in \cite{DHH}, while they allow for computing the spectral distribution of 
\begin{equation*}
i(PU_tQU_t^{\star} - U_tQU_t^{\star}P),
\end{equation*} 
extending a result due to Nica and Speicher (\cite{NS}). As to relating the moments of $P+QPQ$ to those of $PQP$, this problem turns out to be much more trickier than the two previous ones. This is basically due to the fact one is led to enumerate words in 
$\{P,Q\}$ using the alphabet $\{P, QPQ\}$ subject only to $P^2 = P$. Nonetheless, we succeed to derive autonomous recurrence relations for the double sequence $f(n,k), 0 \leq k \leq n,$ encoding the expansion: 
\begin{equation*}
\tau[(P+QPQ)^n] = \tau(P) + \sum_{k=2}^n f(n,k) \tau(PQ)^k, \quad n \geq 2.
\end{equation*} 
In particular, we shall determine explicitly the sequences $f(n,2), n \geq 2,$ and $f(n,3)$, $n \geq 3,$ and show also that $f(n,n)$ is a Lucas sequence (a Fibonacci sequence with different initial values). However, we did not succeed to find a single general expression of $f(n,k)$ valid for any pair $(n,k)$ and we do not believe it exists regarding the complicated form of the obtained recurrence relations (or equivalently the form of the generating function). Nonetheless, one realises from our computations below how subtle could be the spectral study of a `simple' self-adjoint polynomial in $\{P,Q\}$. Even more, the following problem raises: is there any class of (self-adjoint) polynomials for which it would be `possible to write down explicitly' the relations between their moments and those of the angle operator. 

Though we do not have any insight into this problem, we would like to stress that our approach relying on the couple $(R,S)$ of symmetries is closely related to the so-called Kato's dual pair associated with $(P,Q)$. This pair of self-adjoint operators plays a key role in the analysis of the quantum Hall effect and in perturbation theory (see \cite{ASS0}, \cite{ASS} and references therein), and their squares sum to the unit operator. They also satisfy the following additional remarkable properties: they anti-commute and their squares lie in the center of algebra generated by $(P,Q)$. In particular, it is readily seen that $(P-Q)^2, ({\bf 1} - P - Q)^2$ and $C$ are elementary symmetric polynomials of degree two in the underlying Kato's pair. In particular, our previous results written through the Kato's dual pair show that their odd moments are constant, a fact that reminds Theorem 4.1 in \cite{ASS} on the index of a pair of orthogonal projections. 
%{\bf More generally, given a self-adjoint polynomial in $\{P, Q\}$, the structure of the algebra generated by $\{P,Q\}$ together with the trace property of $\tau$ show that its moments are a linear combination of $\tau[(PQ)^k]$, up to multiples of $\tau(P)$ and $\tau(Q)$. However, we do not know whether the coefficients encoding this linear combination could be determined explicitly or at least inductively.}

The paper is organised as follows. In Section 2, we is prove the moment relation between $(P+Q-1)^2$ and $PQP$ and show there how it reduces to the description of the free Jacobi process proved in \cite{DHH} and valid for $\tau(P) = \tau(Q) = 1/2$. In section 3, we prove that the moments of the square of the commutator $C$ coincide with those of $PQ(P-PQ)$ up to a multiplicative factor $2$ and apply this result to the pair $\{P, U_tQU_t^{\star}\}$. The fourth section is devoted to the analysis of the moment structure of 
$P + QPQ$. In particular, we derive there the recurrence relations satisfied by the double sequence $f(n,k), 0 \leq k \leq n,$ and determine explicitly $f(n,2), n \geq 2, f(n,3), n \geq 3,$ and $f(n,n), n \geq 3,$. In the last section, we recall the Kato's dual pair of 
$(P,Q)$ and rewrite our previously obtained results in order to obtain new identities satisfied by their moments.  
%(in which case the spectral distribution of $R$ and $S$ is the symmetric Bernoulli distribution),%Setting

\section{Relating the moments of $PQP$ to those of $(P+Q - {\bf 1})^2$} 
As stated in the previous section, the relation between the moments of $PQP$ and of $(P+Q - {\bf 1})^2$ appeals to the following binomial-type formula satisfied by pairs of involutions, in particular by $R$ and $S$. More precisely, 
\begin{pro}\label{binom}
Let $a,b\in \mathcal{A}$ be two involutions: $a^2=b^2={\bf 1}$. Then, for any $n\geq1$,
\begin{equation*}
(a+b)^{2n}=\binom{2n}{n}{\bf 1}+\sum_{k=1}^n\binom{2n}{n-k}((ab)^k+(ba)^k).
\end{equation*}
\end{pro}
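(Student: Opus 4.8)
The plan is to prove the identity $(a+b)^{2n}=\binom{2n}{n}\mathbf{1}+\sum_{k=1}^n\binom{2n}{n-k}((ab)^k+(ba)^k)$ by exploiting the fact that $a,b$ are involutions. The key structural observation is that any word in $a$ and $b$ of even length, when reduced using $a^2=b^2=\mathbf{1}$, collapses to an alternating word; and an alternating word of even length $2j$ is either $(ab)^j$ or $(ba)^j$ (with $(ab)^0=(ba)^0=\mathbf{1}$). So expanding $(a+b)^{2n}=\sum_{w}w$ over all $2^{2n}$ words $w$ of length $2n$ in the letters $a,b$, I would group the words by the value they reduce to, and the theorem amounts to a counting statement: the number of length-$2n$ words in $\{a,b\}$ that reduce to $(ab)^k$ (for $k\geq 1$) equals $\binom{2n}{n-k}$, the number reducing to $(ba)^k$ is likewise $\binom{2n}{n-k}$, and the number reducing to $\mathbf{1}$ is $\binom{2n}{n}$.

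**Two possible routes**

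I see two natural ways to carry this out. The first is a direct bijective/combinatorial count: a word of length $2n$ is determined by its sequence of letters; reduction to an alternating word is governed by the parity structure of runs, and one can set up a bijection with lattice paths (Dyck-path-like objects) where the "net imbalance" between $a$-blocks and $b$-blocks lands at a prescribed value; the ballot-type count then yields $\binom{2n}{n-k}$. The second — and I think cleaner — route is induction on $n$ using the recurrence $(a+b)^{2(n+1)}=(a+b)^{2n}(a+b)^2$ together with $(a+b)^2=2\cdot\mathbf{1}+ab+ba$. Substituting the inductive formula and using $(ab)^k(ab)=(ab)^{k+1}$, $(ab)^k(ba)=(ab)^{k-1}$ (since the inner $b\,b$ and outer cancellations telescope, using $ab\cdot ba = a b^2 a = a^2 = \mathbf 1$), one reduces everything to a Pascal-type identity among binomial coefficients, namely $2\binom{2n}{n-k}+\binom{2n}{n-k+1}+\binom{2n}{n-k-1}=\binom{2n+2}{n+1-k}$, which follows from applying Pascal's rule twice.

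**The main obstacle and base case**

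The main obstacle in the inductive route is bookkeeping the "boundary" terms carefully: when I multiply the $k=1$ term $(ab+ba)$ or the constant term by $(ab+ba)$, some products drop the power by one and can hit $\mathbf{1}$ or hit $k=1$, so the coefficient of $\mathbf{1}$ and of $ab+ba$ on the right must be checked separately against the claimed $\binom{2n+2}{n+1}$ and $\binom{2n+2}{n}$; this is where an off-by-one error would hide, so I would treat $k=0$ and $k=1$ as explicit special cases of the binomial identity. The base case $n=1$ is just $(a+b)^2=2\mathbf{1}+ab+ba$, which matches $\binom{2}{1}\mathbf{1}+\binom{2}{0}(ab+ba)$. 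Either route works; I would present the induction since it is self-contained and keeps the algebra transparent, relegating the binomial identity to a one-line application of Pascal's rule.
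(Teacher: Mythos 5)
Your proposal is correct, and the inductive route you choose to present is essentially the paper's own proof: the authors also induct on $n$ using $(a+b)^{2n+2}=(2\mathbf{1}+ab+ba)(a+b)^{2n}$, handle the boundary terms $k=0,1$ and $k=n,n+1$ separately (via a halved middle coefficient $\mathcal{C}(l,l)=\tfrac12\binom{2l}{l}$ to unify the bookkeeping), and conclude by the same double application of Pascal's rule. The combinatorial word-counting route you sketch as an alternative is not pursued in the paper.
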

\begin{proof}
We proceed by induction. The formula is obviously true for $n=1$. Let $n\geq 2$ and assume the formula holds true up to rank $n$:
\begin{equation*}
	(a+b)^{2l}=\sum_{k=0}^l \mathcal{C}(l,l-k)((ab)^k+(ba)^k), \quad 1\leq l \leq n,
\end{equation*}
where we set:
\begin{equation*}
	\mathcal{C}(l,k):=
	\begin{cases}
	 \displaystyle 	\binom{2l}{k}&,0\le k\le l-1\\
	 \displaystyle 	\frac{1}{2}\binom{2l}{l}&,k=l
	\end{cases}.
\end{equation*}

Then,
\begin{align*}
	(a+b)^{2n+2} =& (2{\bf 1}+ab+ba)\sum_{k=0}^n \mathcal{C}(n,n-k)((ab)^k+(ba)^k)
	\\=&2\mathcal{C}(n,n)(2{\bf 1}+ab+ba)+\sum_{k=1}^n \mathcal{C}(n,n-k)[2(ab)^k+2(ba)^k
	\\&+(ab)^{k+1}+(ba)^{k+1}+(ab)^{k-1}+(ba)^{k-1}]
	\\=&[4\mathcal{C}(n,n)+2\mathcal{C}(n,n-1)]{\bf 1}+[2\mathcal{C}(n,n)+2\mathcal{C}(n,n-1)+\mathcal{C}(n,n-2)](ab+ba)
	\\&+\sum_{k=2}^{n-1}[\mathcal{C}(n,n-k+1)+2\mathcal{C}(n,n-k)+\mathcal{C}(n,n-k-1)]((ab)^k+(ba)^k)
	\\&+[2\mathcal{C}(n,0)+\mathcal{C}(n,1)]((ab)^n+(ba)^n)+\mathcal{C}(n,0)((ab)^{n+1}+(ba)^{n+1}).
\end{align*}
The proposition follows from elementary properties of binomial coefficients.
\end{proof}
With the help of this proposition, we are able to relate the moments of the self-adjoint operator $(R+S)$ to those of the unitary operator $RS$: 
\begin{cor}\label{moments1}
For any $n\geq0$,
\begin{align*}
\tau[(R+S)^{n}]&=
	\begin{cases}\displaystyle
		\binom{2j}{j}+2\sum_{k=1}^{j}\binom{2j}{j -k}\tau[(RS)^k], & n = 2j\\
		\displaystyle2^{2j}\tau(R+S), & n = 2j+1
	\end{cases}.
\end{align*}
\end{cor}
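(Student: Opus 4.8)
The plan is to apply Proposition~\ref{binom} with $a = R$, $b = S$ and then take the trace, exploiting the trace property to collapse the two terms $(RS)^k$ and $(SR)^k$ into one. Concretely, for the even exponent $n = 2j$, Proposition~\ref{binom} gives
\begin{equation*}
(R+S)^{2j} = \binom{2j}{j}{\bf 1} + \sum_{k=1}^{j}\binom{2j}{j-k}\bigl((RS)^k + (SR)^k\bigr).
\end{equation*}
Applying $\tau$, using linearity and $\tau({\bf 1}) = 1$, reduces the problem to showing $\tau[(RS)^k] = \tau[(SR)^k]$. This is immediate from the trace property: $\tau[(SR)^k] = \tau[S(RS)^{k-1}R] = \tau[R S (RS)^{k-1}] = \tau[(RS)^k]$. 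Hence each bracketed pair contributes $2\tau[(RS)^k]$, which yields the stated formula for $n=2j$.

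For the odd exponent $n = 2j+1$, I would write $(R+S)^{2j+1} = (R+S)^{2j}(R+S)$ and again invoke Proposition~\ref{binom} on the even power. Multiplying the binomial expansion of $(R+S)^{2j}$ on the right by $(R+S)$ produces terms of the form $(RS)^k R$, $(RS)^k S$, $(SR)^k R$, $(SR)^k S$ (times binomial coefficients), together with $\binom{2j}{j}(R+S)$ from the identity term. The key observation is that, because $R^2 = S^2 = {\bf 1}$, each of these words is again, up to cyclic rearrangement allowed under the trace, either an alternating word of \emph{odd} length in $R$ and $S$ or reduces to $R$ or $S$; and crucially every such trace equals either $\tau(R)$ or $\tau(S)$. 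Indeed $(RS)^k R$ is an alternating word starting and ending in $R$, so under the trace it equals $\tau[R(RS)^k] = \tau[(RS)^{k-1}R\cdot RS\cdot\,]$—more directly one checks $(RS)^kR$ is conjugate (by a unitary involution, hence trace-preserving) to an alternating word which telescopes. The cleanest route is: an alternating word of odd length $2m+1$ in the two involutions $R,S$ has the same trace as $R$ or as $S$ depending on the leftmost letter, because $\tau$ is cyclic and $R,S$ are self-inverse. Grouping all contributions proportional to $\tau(R)$ and all those proportional to $\tau(S)$, and noting that by symmetry of the binomial coefficients the two groups carry equal total weight, I expect the combined coefficient to be exactly $\sum_{k=0}^{2j}\binom{2j}{k} = 2^{2j}$, giving $\tau[(R+S)^{2j+1}] = 2^{2j}\bigl(\tau(R) + \tau(S)\bigr) = 2^{2j}\tau(R+S)$.

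The main obstacle I anticipate is the odd case bookkeeping: one must show carefully that \emph{every} word appearing after multiplying by $(R+S)$ has trace equal to $\tau(R+S)$ up to a scalar, and that the scalars sum to $2^{2j}$. This hinges on the fact that any alternating word of odd length in $R$ and $S$ has trace $\tau(R)$ or $\tau(S)$. To see this, let $w$ be such a word of length $2m+1$; since it is alternating it has the form $(XY)^m X$ with $\{X,Y\}=\{R,S\}$, and using $\tau[(XY)^m X] = \tau[X(XY)^m] = \tau[(XY)^{m-1}\,YX\,XY\cdots]$—one pushes an $X$ from the right to the left, it meets an $X$, they cancel via $X^2 = {\bf 1}$, shortening the word by two while preserving the leftmost letter. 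Iterating reduces $w$ to the single letter $X$, so $\tau(w) = \tau(X)$. (Alternatively, $(XY)^mX = X\cdot (YX)^m$ and $(YX)^m(XY)^m = {\bf 1}$ shows $(XY)^mX$ is a unitary conjugate of $X$.) An equivalent, perhaps slicker, argument for the odd case avoids Proposition~\ref{binom} entirely: expand $(R+S)^{2j+1}$ directly as a sum of $2^{2j+1}$ monomials, reduce each using $R^2 = S^2 = {\bf 1}$ to an alternating word, observe each reduces under the trace to $\tau(R)$ or $\tau(S)$, and count. Either way, once the reduction-to-a-single-letter fact is in hand the proof is a short combinatorial identity for binomial sums, and I would relegate that to "elementary properties of binomial coefficients" as in the proof of Proposition~\ref{binom}.
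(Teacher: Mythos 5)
Your even-case argument is exactly the paper's, and your overall strategy for the odd case (multiply the even expansion by $R+S$ and reduce every resulting word to $\tau(R)$ or $\tau(S)$) is also the route the paper takes. However, the key auxiliary claim you rely on in the odd case is stated incorrectly, and the proof you sketch for it fails. You assert that an alternating odd-length word $(XY)^mX$ has trace $\tau(X)$, ``depending on the leftmost letter.'' This is false: $\tau(RSR)=\tau(SR\cdot R)=\tau(S)$, not $\tau(R)$. The correct statement, which is the one the paper records, is parity-dependent: $\tau[(XY)^mX]=\tau(X)$ if $m$ is even and $\tau(Y)$ if $m$ is odd. Your reduction argument breaks at exactly this point: cycling the rightmost $X$ to the front gives $X\cdot(XY)^m=Y(XY)^{m-1}$, so each cancellation shortens the word by two but \emph{flips} the leftmost letter rather than preserving it. The parenthetical ``unitary conjugate of $X$'' claim is likewise only true for even $m$ (for odd $m$ the word is conjugate to $XYX$, hence to $Y$), and the identity $(XY)^mX=X(YX)^m$ you cite exhibits the word as $XU^{*}$ with $U=(XY)^m$, which is not a conjugation.

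The final answer nevertheless survives, but for a reason your write-up does not supply: for each fixed $k$ the four words $(RS)^kR$, $(RS)^kS$, $(SR)^kR$, $(SR)^kS$ always split into two with trace $\tau(R)$ and two with trace $\tau(S)$ (which two depends on the parity of $k$), so each group of four contributes $2\tau(R+S)$ regardless of parity, and the binomial sum $\binom{2j}{j}+2\sum_{k=1}^{j}\binom{2j}{j-k}=2^{2j}$ closes the computation. Your appeal to ``symmetry of the binomial coefficients'' to equalize the $\tau(R)$- and $\tau(S)$-weights is not the right mechanism and would not withstand scrutiny as written; you need the parity-corrected trace formula (the paper's displayed identities for $\tau(a(ab)^k)$ and $\tau(b(ab)^k)$) to carry out the grouping honestly.
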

\begin{proof}
The even moments follows readily from Proposition \ref{binom} applied to $a=R$ and $b= S$ together with the trace property of $\tau$. As to the odd ones, note that that if $a,b\in \mathcal{A}$ are such that $a^2=b^2={\bf 1}$, then
\begin{equation*}
	\tau(a(ab)^k)=\tau(a(ba)^k)=\begin{cases}
		\tau(a)&,k\ even\\
		\tau(b)&,k\ odd
	\end{cases}
\end{equation*}
and similarly
\begin{equation*}
	\tau(b(ab)^k)=\tau(b(ba)^k)=\begin{cases}
		\tau(a)&,k\ odd\\
		\tau(b)&,k\ even
	\end{cases}.
\end{equation*}
Consequently, Proposition \ref{binom} again yields: 
\begin{align*}
	\tau[(R+S)^{2n+1}]&=\tau[(R+VSV^{\star})(R+VSV^{\star})^{2n}]
	\\&=\tau(R+S)\left[\binom{2n}{n}+2\sum_{k=1}^n\binom{2n}{n-k}\right]
	\\&=\tau(R+S)\sum_{k=-n}^n\binom{2n}{n+k} = 2^{2n}\tau(R+S)
\end{align*}
as claimed.
\end{proof}

According to this corollary, the equality 
\begin{equation*}
\tau[(R+S)^{2j}] = \binom{2j}{j}+2\sum_{k=1}^{j}\binom{2j}{j -k}\tau[(RS)^k],
\end{equation*}
holds for any $j \geq 0$ (an empty sum is zero). On the other hand, the proof of Proposition 4.1 in \cite{Ham1} shows that: 
\begin{equation*}
\tau[(PQP)^j] = \frac{1}{2^{2j+1}}\binom{2j}{j} + \frac{\tau(R+S)}{4} + \frac{1}{2^{2j}}\sum_{k=1}^{j}\binom{2j}{j -k}\tau[(RS)^k)], \quad j \geq 0.
\end{equation*}
Comparing both formulas and noting that $(R+S)/2 = P+Q-{\bf 1}$, we end up with the sough moment relation:
\begin{teo}
For any $j \geq 1$,
\begin{equation}\label{Eq1}
2\tau[(PQP)^j] - \frac{\alpha+\beta}{2} = \frac{1}{2^{2j}}\tau[(R+S)^{2j}] = \tau[(P+Q - {\bf 1})^{2j}]. 
\end{equation}
\end{teo}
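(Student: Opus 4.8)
The plan is to read off the theorem as a direct consequence of two already-available formulas, so the proof is essentially a matter of lining them up and matching coefficients. The first ingredient is Corollary \ref{moments1}, which gives, for every $j\geq 0$,
\begin{equation*}
\tau[(R+S)^{2j}] = \binom{2j}{j} + 2\sum_{k=1}^{j}\binom{2j}{j-k}\tau[(RS)^k].
\end{equation*}
The second ingredient is the expansion quoted from the proof of Proposition 4.1 in \cite{Ham1},
\begin{equation*}
\tau[(PQP)^j] = \frac{1}{2^{2j+1}}\binom{2j}{j} + \frac{\tau(R+S)}{4} + \frac{1}{2^{2j}}\sum_{k=1}^{j}\binom{2j}{j-k}\tau[(RS)^k],
\end{equation*}
valid for all $j\geq 0$. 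First I would multiply the second identity by $2^{2j+1}$ and the first by $1$, and observe that the combinations $\binom{2j}{j} + 2\sum_{k}\binom{2j}{j-k}\tau[(RS)^k]$ appear in both (up to the overall factor $2^{2j}$ in the $PQP$-expansion).

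Concretely, from the $(PQP)^j$ formula one gets
\begin{equation*}
2^{2j}\cdot 2\,\tau[(PQP)^j] = \binom{2j}{j} + 2\sum_{k=1}^{j}\binom{2j}{j-k}\tau[(RS)^k] + 2^{2j}\cdot\frac{\tau(R+S)}{2},
\end{equation*}
and the right-hand side is, by Corollary \ref{moments1}, exactly $\tau[(R+S)^{2j}] + 2^{2j-1}\tau(R+S)$. Dividing through by $2^{2j}$ yields
\begin{equation*}
2\,\tau[(PQP)^j] - \frac{\tau(R+S)}{2} = \frac{1}{2^{2j}}\tau[(R+S)^{2j}].
\end{equation*}
Writing $\alpha=\tau(R)=2\tau(P)-1$ and $\beta=\tau(S)=2\tau(Q)-1$, so that $\tau(R+S)=\alpha+\beta$, gives the middle equality of \eqref{Eq1}. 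The final equality is then purely formal: since $R=2P-{\bf 1}$ and $S=2Q-{\bf 1}$, one has $R+S = 2(P+Q-{\bf 1})$, hence $(R+S)^{2j} = 2^{2j}(P+Q-{\bf 1})^{2j}$ and therefore $2^{-2j}\tau[(R+S)^{2j}] = \tau[(P+Q-{\bf 1})^{2j}]$.

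There is essentially no obstacle here beyond bookkeeping; the only point demanding a little care is the consistent tracking of the powers of two and making sure the $j\geq 1$ (rather than $j\geq 0$) restriction in the theorem is harmless — both source formulas hold for $j\geq 0$, and at $j=0$ the claimed identity degenerates to the trivial $2 - (\alpha+\beta)/2 = \tau[(P+Q-{\bf 1})^0]=1$ only when $\alpha+\beta=2$, i.e. $P=Q={\bf 1}$, which is why the statement is phrased for $j\geq 1$. (Alternatively, since an empty sum is zero, the $j=0$ case of the two displayed expansions is consistent and it is only the final normalization that fails, so restricting to $j\geq1$ is the clean choice.) I would also remark, as the authors do in the surrounding text, that the middle term $(R+S)/2 = P+Q-{\bf 1}$ is what makes the statement natural, and that combined with Corollary \ref{moments1} the theorem expresses $\tau[(P+Q-{\bf 1})^{2j}]$ explicitly in terms of the moments $\tau[(PQP)^k]=\tau[(RS)^k]$-type data, i.e. in terms of the angle operator alone.
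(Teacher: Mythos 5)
Your proposal is correct and follows exactly the paper's own route: comparing Corollary \ref{moments1} with the expansion of $\tau[(PQP)^j]$ quoted from the proof of Proposition 4.1 in \cite{Ham1}, and then using $(R+S)/2 = P+Q-{\bf 1}$. The coefficient bookkeeping checks out, and your observation about why the statement is restricted to $j\geq 1$ is a sensible addition.
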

In particular, setting
\begin{equation*}
\alpha := \tau(R) = 2\tau(P) - 1, \quad \beta := \tau(S) = 2\tau(Q) - 1, 
\end{equation*}
and if $\tau(P) = \tau(Q) = 1/2$ then $\alpha = \beta = 0$ whence
\begin{equation*}
\frac{1}{\tau(P)}\tau[(PQP)^j] = \tau[(P+Q - {\bf 1})^{2j}]. 
\end{equation*}
Consequently, the spectral distribution of $PQP$ in the compressed space $(P\mathcal{A}P, 2\tau)$ is the push-forward of the spectral distribution of $P+Q$ under the map $x \mapsto (x-1)^2$. 
In particular, this holds whenever $P$ and $Q$ are in general position as follows from Halmos two projections Theorem (\cite{Hal}, see Theorem 2, p.384). 
This is also in agreement with Theorem in \cite{Aub} where the projections are assumed to be free in $\mathcal{A}$ and applies more generally to the so-called free Jacobi process as shown in the next paragraph. 

On the other hand, replacing $P,Q$ by their orthogonal complements ${\bf 1} - P, {\bf 1} - Q$, one readily gets 
\begin{equation}\label{Eq10}
2\tau[(({\bf 1}-P)({\bf 1}-Q)({\bf 1} - P))^j] + \frac{\alpha+\beta}{2} =  \tau[(P+Q - {\bf 1})^{2j}]
\end{equation}
whence the following identity: 
\begin{equation}\label{Identity-Diff}
2\tau[(PQP)^j] - 2\tau[(({\bf 1}-P)({\bf 1}-Q)({\bf 1} - P))^j]  = \alpha+\beta.
\end{equation}
If we only replace $P$ by ${\bf 1}-P$ then \eqref{Eq1} yields 
\begin{equation}\label{Eq1}
2\tau[(({\bf 1}-P)Q{\bf 1}-P))^j] + \frac{\alpha-\beta}{2} = \frac{1}{2^{2j}}\tau[(R-S)^{2j}] = \tau[(P-Q)^{2j}]. 
\end{equation}
In the study of relative positions of finite-dimensional subspaces, $(P+Q-{\bf 1})^2$ and $(P-Q)^2$ are referred to as the closeness and separation operators (\cite{Dav}, \cite{Gal}). More generally, they are squares of the cosine and the sine operators given by Halmos Theorem.

\subsection{The free Jacobi process and its additive counterpart}
The free Jacobi process $(J_t)_{t \geq 0}$ is defined as $(PQ_tP)_{t \geq 0}$ with $Q_t$ being of the form $U_tQ'U_t^{\star}$, where $Q' \in \mathcal{A}$ is an orthogonal projection, $(U_t)_{t \geq 0} \in \mathcal{A}$ is a free unitary Brownian motion (\cite{Bia}) and is assumed to be $\star$-free from $\{P, Q'\}$. Note however that for any fixed $t > 0$, $P$ and $Q_t$ are not free while they are in the limiting regime $t \rightarrow +\infty$ since $U_t$ weakly converges to a Haar unitary operator $U_{\infty} = U$.
Moreover, $J_t$ is a self-adjoint operator valued in the compressed probability space
\begin{equation*}
\left(P\mathcal{A}P, \tau/\tau(P)\right).
\end{equation*}
In this respect, it was proved in \cite{DHH} and \cite{IU} that if $\alpha = \beta = 0$ then $J_t$ is distributed in the compressed space $\left(P\mathcal{A}P, 2\tau\right)$ as 
\begin{equation}\label{Jacobi}
\frac{2{\bf 1} + U_{2t} + U_{2t}^{\star}}{4} 
\end{equation}
in $(\mathcal{A}, \tau)$. From a geometrical perspective, $[\arg(U_{2t})]/2 \in [0,\pi/2]$\footnote{The spectral distribution of $U_t$ in invariant under complex conjugation so that it is completely determined by its restriction to the upper half of the unit circle.} is the infinite dimensional Jordan angle $\Theta_t$ between $P$ and $(Q_t)$ and the identity \eqref{Eq1} shows that $|P + Q_t - {\bf 1}|$ is distributed as $\cos(\Theta_t)$.

The description \eqref{Jacobi} may be proved from the first equality displayed in \eqref{Eq1} since the spectral distributions of $(R+S)^2$ in 
$(\mathcal{A}, \tau)$ and of $4PQP$ in $(P\mathcal{A}P,  2\tau)$ coincide when $\alpha = \beta = 0$. Accordingly, setting $S' = 2Q'-{\bf 1}$ then 
\begin{equation*}
S_t ;= 2Q_t - {\bf 1} = U_tS'U_t^{\star} 
\end{equation*}
and $RS_t = RU_tS'U_t^{\star}$ is distributed as $U_{2t}$ starting at $RS'$. Moreover,
\begin{equation*}
(R+S_t)^2 = 2{\bf 1} + RU_tS'U_t^{\star} + (RU_tS'U_t^{\star})^{\star}.
\end{equation*} 
Note that similar results were proved in \cite{BL} in relation to the so-called $t$-freeness interpolating between the classical independence and Voiculescu's freeness property.  
%By the virtue of the spectral Theorem and the fact that $U_{2t}$ and $U_{2t}^{\star}$ have the same spectral distribution,amounts analytically to the variable change 
%$\theta \mapsto \cos^2(\theta/2), \theta \in [0,\pi]$. Viewed this way, 
%\begin{equation*}
%(1/2)\arg(U_{2t}) \in [0,\pi/2]
%\end{equation*}
%may be interpreted as an operator-valued Jordan angle between $P$ and $U_tQ'U_t^{\star}$. This is in agreement with Halmos two projections Theorem (\cite{Hal}, Theorem 2) since 
%This reasoning remains valid for arbitrary orthogonal projections $\{P, Q\}$ with $\tau(P) = \tau(Q) = 1/2$ provided that the unitary Brownian motion $(U_t)_{t \geq 0}$ starts at $U_0 = RS$ and is free from $\{P,Q\}$. In this respect, it agrees with Remark 3.4 in \cite{IU} and also gives an almost algebraic proof of the description of the spectral distribution of $4PU_tQU_t^{\star}P$ in the compressed algebra when $\tau(P) = \tau(Q) = 1/2$. Actually, the only analytic part is concerned with proving the fact that $RU_tSU_t^{\star}$ is still distributed as $U_{2t}$ when $R$ and $S$ are not necessarily equal, which may be checked using free stochastic calculus. 

\section{Relating the moments of $C$ and of $PQP$}
The commutator of two self-adjoint operators plays a key role in both mathematics and mathematical physics. In the free probability realm, the distribution of the commutator of two free variables was determined in \cite{NS} relying on combinatorics of non crossing partitions. In \cite{Aub}, the author appeals to the asymptotic freeness property to retrieve Nica and Speicher's description of the commutator of two free orthogonal projections. Here, we consider 
\begin{equation*}
C= i(PQ - QP),
\end{equation*}
without assuming that $P$ and $Q$ are free and prove the following equality: 
\begin{teo}\label{Th2}
For any $j \geq 1$,
\begin{equation*}
\tau(C^{2j}) = 2\tau[(PQP(P-PQP))^j].
\end{equation*}
\end{teo}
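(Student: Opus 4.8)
The plan is to mimic the strategy used for $P+Q$: pass from the projections $P,Q$ to the symmetries $R=2P-\mathbf 1$, $S=2Q-\mathbf 1$, compute $C$ in terms of $R$ and $S$, and exploit that $R,S$ are involutions together with the trace property. First I would observe that $PQ-QP = \tfrac14(RS-SR)$, since the constant and linear terms cancel in the difference $RS - SR = (2P-\mathbf 1)(2Q-\mathbf 1) - (2Q-\mathbf 1)(2P-\mathbf 1) = 4(PQ-QP)$. Hence $C = \tfrac{i}{4}(RS-SR)$ and $C^{2j} = \tfrac{1}{4^{2j}}(i(RS-SR))^{2j} = \tfrac{(-1)^{j}}{4^{2j}}(RS-SR)^{2j}$. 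So the task reduces to expanding $\tau[(RS-SR)^{2j}]$ for the pair of involutions $R,S$, exactly as Proposition \ref{binom} did for $(a+b)^{2n}$.

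Next I would expand $T:=RS-SR$. Writing $u:=RS$ (a unitary, since $R,S$ are involutions) so that $SR = (RS)^{\star} = u^{\star} = u^{-1}$, we get $T = u - u^{-1}$. But this naive substitution is only correct inside a trace if the factors commute; $u$ and $u^{-1}$ do commute, so in fact $T = u - u^{-1}$ as an honest identity whenever we treat $RS$ and $SR$ as each other's inverse — wait, that is exactly the case here, so $T = u - u^{-1}$ genuinely, and then $T^{2j} = (u-u^{-1})^{2j} = \sum_{\ell=0}^{2j}\binom{2j}{\ell}(-1)^{\ell}u^{2j-2\ell}$, whose trace is $\sum_{\ell}\binom{2j}{\ell}(-1)^{\ell}\tau(u^{2j-2\ell})$. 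Using $\tau(u^{m}) = \tau((RS)^{m})$ and re-centering the index gives $\tau(T^{2j}) = \sum_{k=-j}^{j}(-1)^{j-k}\binom{2j}{j-k}\tau[(RS)^{2k}]$ (the odd powers of $u$ pair off with alternating sign and, by the trace property applied to $\tau((RS)^{m})=\tau((SR)^{m})$, do not contribute net terms of the wrong parity). Combining with the prefactor $(-1)^{j}/4^{2j}$ yields a clean closed form for $\tau(C^{2j})$ as an alternating combination of the $\tau[(RS)^{2k}]$.

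Then I would do the same bookkeeping on the right-hand side. We have $PQP(P-PQP) = PQP - (PQP)^2 = PQP - PQPQP$, so $\tau[(PQP(P-PQP))^j]$ is a polynomial in the moments $\tau[(PQP)^m]$; alternatively, using the trace, $\tau[(PQP - PQPQP)^j] = \tau[(PQ(P-PQ))^j]$ via cyclicity, and one can expand $PQ(P-PQ) = PQP - PQPQ$ and track powers of $PQ$. The cleanest route is to invoke the formula quoted from \cite{Ham1}, namely $\tau[(PQP)^j] = \tfrac{1}{2^{2j+1}}\binom{2j}{j} + \tfrac{\alpha+\beta}{4} + \tfrac{1}{2^{2j}}\sum_{k=1}^{j}\binom{2j}{j-k}\tau[(RS)^k]$, together with the analogous expansion for the composite operator $PQP(P-PQP)$ expressed through $\tau[(RS)^k]$; substituting and simplifying the binomial sums should reproduce the alternating combination found for $\tau(C^{2j})$, up to the factor $2$.

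The main obstacle I expect is the right-hand side: one must either (i) find the correct expansion of $\tau[(PQP(P-PQP))^j]$ directly in terms of $\tau[(RS)^k]$ — which requires understanding how products of the word $PQP(P-PQP)$ collapse, an enumeration problem of the same flavor as (but easier than) the $f(n,k)$ analysis in Section 4 — or (ii) relate $PQP(P-PQP)$ back to the symmetries. Route (ii) is attractive: since $PQP = \tfrac14(R+S)^2 - \tfrac{?}{}\cdots$ — more precisely one checks $4PQP$ has the same moments in $(P\mathcal AP,2\tau)$ as $(R+S)^2$ in $(\mathcal A,\tau)$ when $\alpha=\beta=0$ by \eqref{Eq1}, and in general $2\tau[(PQP)^j] = \tfrac{1}{2^{2j}}\tau[(R+S)^{2j}] + \tfrac{\alpha+\beta}{2}$ — so $P-PQP$ and $PQP$ can both be read off from the spectral data of $(R+S)^2$, and the product $PQP(P-PQP) = PQP - (PQP)^2$ corresponds under the push-forward $x\mapsto (x-1)^2$ to a polynomial in $\tfrac14(R+S)^2$. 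Matching $\tau[(\tfrac14(R+S)^2(\mathbf 1 - \tfrac14(R+S)^2))^j]$, after the push-forward, against the alternating sum $\tau(C^{2j})$ is then a purely combinatorial identity among binomial coefficients, and that verification — showing the two binomial expressions agree with the factor $2$ inserted — is the step I would allocate the most care to; everything else is the algebra $C=\tfrac i4(RS-SR)$ and cyclic permutations under $\tau$.
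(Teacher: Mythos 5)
Your treatment of the left-hand side is correct and is exactly the paper's: $C=\tfrac{i}{4}(RS-SR)$, $SR=(RS)^{-1}$, and the binomial expansion of $(u-u^{-1})^{2j}$ gives $\tau(C^{2j})=\tfrac{1}{16^j}\bigl[\binom{2j}{j}+2\sum_{k=1}^{j}(-1)^{k}\binom{2j}{j-k}\tau[(RS)^{2k}]\bigr]$ (your parenthetical about ``odd powers pairing off'' is superfluous: every power of $u$ occurring in $(u-u^{-1})^{2j}$ is already even). The genuine gap is the right-hand side: you propose two routes and carry out neither, deferring the whole point of the theorem to an unverified ``combinatorial identity among binomial coefficients.'' Route (i) does reduce $\tau[(PQP-(PQP)^2)^j]=\sum_{m=0}^{j}\binom{j}{m}(-1)^m\tau[(PQP)^{j+m}]$ to the cited formula from \cite{Ham1}, but the resulting identities, e.g.\ $\sum_{m=0}^{j}\binom{j}{m}\tfrac{(-1)^m}{4^m}\binom{2(j+m)}{j+m-k}=\tfrac{(-1)^{k/2}}{4^j}\binom{2j}{j-k/2}$ for even $k$ and $0$ for odd $k$, are not routine and are nowhere established; as written the proof does not close. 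You also never address the additive constant $\tfrac{\alpha+\beta}{4}$ in the \cite{Ham1} formula, which must be seen to cancel (it does, because $\sum_{m}(-1)^m\binom{j}{m}=0$ for $j\ge1$) for the theorem to hold with no assumption on $\tau(P),\tau(Q)$.

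Your route (ii) can in fact be closed, and more easily than you anticipate: setting $Y:=(P+Q-\mathbf{1})^2=\tfrac{1}{4}(2\cdot\mathbf{1}+RS+SR)$, a two-line computation with $R^2=S^2=\mathbf{1}$ gives the \emph{operator} identity $C^2=-\tfrac{1}{16}(RS-SR)^2=Y(\mathbf{1}-Y)$, so no binomial matching is needed at all; one then applies \eqref{Eq1} termwise to $\tau[(Y-Y^2)^j]=\sum_m\binom{j}{m}(-1)^m\tau[Y^{j+m}]$ and uses the cancellation of the constants to get $2\tau[(X-X^2)^j]$ with $X=PQP$. (This is essentially the identity $C^2=B^2(\mathbf{1}-B^2)$ the paper records later via Kato's dual pair.) The paper itself takes a third route: it introduces the operator $(\mathbf{1}+R)(\mathbf{1}+S)(\mathbf{1}+R)(\mathbf{1}-S)=16\,PQ(P-PQ)$, shows by cyclicity that its moments are those of $(\mathbf{1}+RSR)(\mathbf{1}-S)$, and evaluates these by a second appeal to the proof of Proposition 4.1 in \cite{Ham1}, finishing with $\tau[(PQ(P-PQ))^j]=\tau[(PQP(P-PQP))^j]$. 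Any of the three routes works, but your write-up completes none of them.
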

\begin{proof} 
Use $R = 2P-{\bf 1}, S = 2Q-{\bf 1}$ to write
\begin{equation*}
C = \frac{\sqrt{-1}}{4}(RS -  SR).
\end{equation*}
Up to a multiplicative factor, this is the imaginary part of the unitary operator $RS$. Now, note that $RS$ and $(RS)^{-1} = SR$ have the same distribution since $\tau$ is tracial. 
Consequently, $C$ is an even element: its odd moments vanish. Indeed, 
\begin{align*}
\tau(C^n) = \left( \frac{i}{4}\right)^n \sum_{k=0}^n(-1)^{n-k}\binom{n}{k} \tau[(RS)^{2k-n}],
\end{align*}
and if $n$ is odd then 
\begin{align*}
2\sum_{k=0}^n(-1)^{n-k}\binom{n}{k} \tau[(RS)^{2k-n}] & = \sum_{k=0}^n\left\{(-1)^{n-k}\binom{n}{k} \tau[(RS)^{2k-n}] + (-1)^{k}\binom{n}{k} \tau[(RS)^{n-2k}]\right\} 
\\&= \sum_{k=0}^n[(-1)^{n-k} + (-1)^k]\binom{n}{k} \tau[(RS)^{2k-n}] = 0.
\end{align*}
Otherwise, $n = 2p \geq 2$ is even and the even moments of $A$ are given by: 
\begin{align*}
\tau(C^{2j}) & = \frac{1}{16^j} \left\{\binom{2j}{j} + 2\sum_{k=0}^{j-1}(-1)^{j-k}\binom{2j}{k} \tau[(RS)^{2(j-k)}]\right\} 
\\& = \frac{1}{16^j} \left\{\binom{2j}{j} + 2\sum_{k=1}^{j}(-1)^{k}\binom{2j}{j-k} \tau[(RS)^{2k}]\right\}. 
\end{align*}
Next, consider the operator 
\begin{equation*}
({\bf 1}+R)({\bf 1}+S)({\bf 1}+R)({\bf 1}-S).
\end{equation*}
Since $({\bf 1}+S)({\bf 1}-S) = 0$, then this operator reduces to
\begin{equation*}
({\bf 1}+R)({\bf 1}+S)R({\bf 1}-S),
\end{equation*}
and since $\tau$ is a trace, then the moments of the latter coincide with those of
\begin{equation*}
({\bf 1}-S)({\bf 1}+R)({\bf 1}+S)R = ({\bf 1}-S)R({\bf 1}+S)R. 
\end{equation*}
But
\begin{align*}
({\bf 1}-S)R({\bf 1}+S)R &= (R-SR)(R+SR) 
\\& = {\bf 1} +RSR - S - (SR)^2 
\\& = ({\bf 1}+RSR)({\bf 1}-S).
\end{align*}
Consequently, for any $j \geq 1$
\begin{align*}
\tau[[({\bf 1}+R)({\bf 1}+S)({\bf 1}+R)({\bf 1}-S)]^j] & = \tau[[({\bf 1}+RSR)({\bf 1}-S)]^j]
\\& = \frac{1}{2}\binom{2j}{j} + 2^{2n-2} \tau(RSR -S) \\& + \sum_{k=1}^{j}(-1)^{k}\binom{2j}{j-k} \tau[(RS)^{2k}],
\end{align*}
where the last equality follows from the proof of Proposition 4.1 in \cite{Ham1}. Keeping in mind $R = 2P-{\bf 1}, S = 2Q-{\bf 1}$ and using again the trace property of $\tau$, we further get:
\begin{equation*}
\frac{2}{16^j} \tau[[({\bf 1}+R)({\bf 1}+S)({\bf 1}+R)({\bf 1}-S)]^j] = 2 \tau[(PQ(P-PQ))^j]  = \tau(C^{2j}).
\end{equation*}
Finally, noting that $PQ(P-PQ) = PQP(P - PQ)$ and since $\tau$ is tracial, we get the equality:
\begin{equation*}
\tau[(PQP(P-PQ))^j] = \tau[(PQP(P-PQP))^j] 
\end{equation*}
proving the theorem.
\end{proof}
In particular, if $\tau(P) = \tau(Q) = 1/2$ then the distribution of $C^2$ in $(\mathcal{A}, \tau)$ is the pushforward of the distribution of $PQP(P-PQP)$ in the compressed space $(P\mathcal{A}P, 2\tau)$ under the map $x \in x(1-x), x \in [0,1]$. 
This is in agreement with Example 1 from \cite{Aub}. More generally, we get the following by-product: 
\begin{cor}
For any $t > 0$, the square of the commutator 
\begin{equation*}
C_t:=  i(PU_tQU_t^{\star} - U_tQU_t^{\star}P)
\end{equation*}
has the same spectral distribution as: 
\begin{equation*}
\frac{[2{\bf 1} + U_{2t} + U_{2t}^{\star}][2{\bf 1} - (U_{2t} + U_{2t}^{\star})]}{16}.
\end{equation*}
\end{cor}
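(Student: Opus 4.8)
The plan is to specialize Theorem \ref{Th2} to the pair $\{P, U_tQU_t^\star\}$ and then invoke the description of the free Jacobi process recalled in \eqref{Jacobi}. First I would set $Q_t := U_tQU_t^\star$ and apply Theorem \ref{Th2} verbatim to the projections $P$ and $Q_t$ (the theorem is stated for arbitrary orthogonal projections, so no freeness hypothesis is needed); this yields
\begin{equation*}
\tau(C_t^{2j}) = 2\tau\bigl[(PQ_tP(P-PQ_tP))^j\bigr] = 2\tau\bigl[(J_t({\bf 1}-J_t))^j\bigr]
\end{equation*}
for all $j\geq 1$, where $J_t = PQ_tP$ is the free Jacobi process viewed in the compressed space $(P\mathcal{A}P, 2\tau)$ — note that $P$ acts as the identity there, so $P - PQ_tP$ plays the role of $({\bf 1}-J_t)$ inside that corner. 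Hence the spectral distribution of $C_t^2$ in $(\mathcal{A},\tau)$ is the pushforward under $x\mapsto x(1-x)$ of the distribution of $J_t$ in $(P\mathcal{A}P, 2\tau)$.

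Next I would feed in \eqref{Jacobi}: since $\tau(P)=\tau(Q)=1/2$ is the standing assumption in this subsection (i.e. $\alpha=\beta=0$), the distribution of $J_t$ in $(P\mathcal{A}P, 2\tau)$ coincides with that of $(2{\bf 1}+U_{2t}+U_{2t}^\star)/4$ in $(\mathcal{A},\tau)$. Pushing this forward under $x\mapsto x(1-x)$ gives that $C_t^2$ has the same distribution as
\begin{equation*}
\frac{2{\bf 1}+U_{2t}+U_{2t}^\star}{4}\left({\bf 1} - \frac{2{\bf 1}+U_{2t}+U_{2t}^\star}{4}\right) = \frac{2{\bf 1}+U_{2t}+U_{2t}^\star}{4}\cdot\frac{2{\bf 1}-(U_{2t}+U_{2t}^\star)}{4},
\end{equation*}
which is exactly the claimed operator. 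Since both factors are functions of the single self-adjoint operator $U_{2t}+U_{2t}^\star$ they commute, so the product is genuinely self-adjoint and the statement "has the same spectral distribution as" is unambiguous; one only needs equality of all moments, which follows by applying the trace to powers and using that $x\mapsto x(1-x)$ is a polynomial, so moments of the pushforward are linear combinations of moments of $J_t$, matched term by term via \eqref{Jacobi}.

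The only genuinely delicate point is bookkeeping about which ambient space each distributional identity lives in. Theorem \ref{Th2} produces an identity between a moment of $C_t$ in $(\mathcal{A},\tau)$ and a moment of $J_t({\bf 1}-J_t)$ computed with $\tau$ (not $2\tau$), but the factor $2$ on the right is precisely what converts it to the $2\tau$-moment of $J_t({\bf 1}-J_t)$ in the corner $(P\mathcal{A}P, 2\tau)$ — one must check $\tau[(PXP)^j] = \tfrac12 (2\tau)[(PXP)^j]$ and that $P-PQ_tP = P({\bf 1}-Q_t)P$ restricts to ${\bf 1}_{P\mathcal{A}P} - J_t$ there, so that the pushforward-under-$x(1-x)$ statement is consistent with the factor $2$. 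After that alignment the corollary is immediate; I expect no real obstacle beyond this normalization check, which is the same $\tau(P)=1/2$ accounting already used to pass from Theorem 1 to \eqref{Jacobi}.
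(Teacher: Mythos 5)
Your proposal is correct and follows exactly the route the paper intends (the corollary is stated without proof as a ``by-product''): specialize Theorem \ref{Th2} to the pair $\{P, U_tQU_t^{\star}\}$, use the factor $2$ to read the right-hand side as the moments of $J_t({\bf 1}-J_t)$ in $(P\mathcal{A}P, 2\tau)$, and push the description \eqref{Jacobi} forward under $x\mapsto x(1-x)$. Your normalization check (that the factor $2$ is precisely the passage from $\tau$ to $2\tau$ on the corner, under the standing assumption $\tau(P)=\tau(Q)=1/2$) is the only delicate point and you handle it correctly.
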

\section{Yet another polynomial: $P+QPQ$} 
So far, we considered the sum, the angle operator and the self-adjoint commutator of two projections, which are basic examples of self-adjoint polynomials in $(P,Q)$. By the virtue of what we already proved, it is natural to tackle the problem of describing spectral distributions of an arbitrary self-adjoint polynomials. However, the complexity of this problem may increase drastically even for `simple' polynomials such as $P + QPQ$. This polynomial was considered in \cite{Aub} subject to the freeness of $\{P,Q\}$ and one already realises that the density of the corresponding spectral distribution admits a more complicated expression compared to those corresponding the previous self-adjoint polynomials (its support is not connected). 

Apparently, replacing $(P,Q)$ by $(R,S)$ does not make the problem easier. In order to have more insight into the structure of the moments of $P + QPQ$, one proceed as follows. Firstly, an induction shows that for any $n \geq 2$, the expansion $(P+QPQ)^n$ contains at most the following factors: 
\begin{equation}\label{terms}
P, \quad \{(PQ)^k, (QP)^k\}_{k=2}^n, \quad \{P(QP)^k\}_{k=2}^{n-1}, \quad \{Q(PQ)^k\}_{k=2}^n,
\end{equation}
where the third set is empty for $k=2$. 
Indeed, $(P+QPQ)^2 = P + (PQ)^2 + (QP)^2 + Q(PQ)^2$. Moreover, assuming this claim holds true up to order $n \geq 2$, then the induction is readily checked from the arguments below:
\begin{itemize}
\item $P, \{(PQ)^k\}_{k=2}^{n}, \{P(QP)^k\}_{k=2}^{n-1}$ are invariant by multiplication to the left by $P$. 
\item $(PQ)^{n+1} = P [Q(PQ)^n]$ and $P(QP)^n = P[(QP)^n]$.
\item $(QPQ)[(PQ)^k] = Q(PQ)^{k+1}, 2 \leq k \leq n$, while $Q(PQ)^2 = [QPQ][QPQ]$ appears only when $n=2$.
\item $QPQ[(QP)^k] = (QP)^{k+1}, 2 \leq k \leq n,$ while $(QP)^2 = [QPQ]P$.
\end{itemize}
Here are the first few expansions:
\begin{equation*}
(P+QPQ)^2 = P + (PQ)^2 + (QP)^2 + Q(PQ)^2, 
\end{equation*}
\begin{align*}
(P+QPQ)^3 & = P + (PQ)^2 + (QP)^2 + P(QP)^2 \\& 
+(PQ)^3 + (QP)^3 + 2Q(PQ)^3, 
\end{align*}
\begin{align*}
(P+QPQ)^4 & = P + (PQ)^2 + (QP)^2 + 2P(QP)^2 + P(QP)^3 \\& 
+(PQ)^3 + (QP)^3 + Q(PQ)^3 \\& 
+2(PQ)^4 + 2(QP)^4 + 3Q(PQ)^4.
\end{align*}
%\begin{align*}
%(P+QPQ)^5 & = P + (PQ)^2 + (QP)^2 + 3P(QP)^2 + 2P(QP)^3 + 2P(QP)^4 \\& 
%+(PQ)^3 + (QP)^3 + Q(PQ)^3 + 2Q(PQ)^4 + 5Q(PQ)^5\\& 
%+3(PQ)^4 + 3(QP)^4 + 3(PQ)^5 + 3(QP)^5.
%\end{align*}
Secondly, the previously proved claim together with the trace property satisfied by $\tau$ show that the moments of $P + QPQ$ may be written as: 
\begin{equation*}
\tau[(P+QPQ)^n] = \tau(P) + \sum_{k=2}^n f(n,k) \tau(PQ)^k, \quad n \geq 2,
\end{equation*}
where $f(n,k) > 0$. 
Finally and most importantly, we need to compute $f(n,k), 2 \leq k \leq n$. In this respect, we shall prove the following recurrence relations: 

\begin{teo}\label{RecRel}
	The family $(f(n,k))_{2\le k\le n}$ is characterised by the following identities:
	\begin{itemize}
		\item $(f(n,n))_{n \geq 1}$ is the Lucas sequence: for any $n \geq 3$,
		\begin{equation*}\label{k=n}
			f(n,n)=f(n-1,n-1)+f(n-2,n-2), \quad f(1,1) = 1, f(2,2) = 3.
		\end{equation*}
		\item If $k\in\{2,3\}$ then 
		\begin{equation*}
			f(n,k)=n+\delta_{n,k}, \quad n \geq k.
		\end{equation*}
		\item For any $n\ge4$ and $k\in\{3,\ldots,n-1\}$,
		\begin{equation*}\label{general}
			f(n,k)=f(n-1,k-1)+f(n-1,k)-f(n-2,k-1)+f(n-2,k-2),
		\end{equation*}
		where $f(n,1) = 0$ for any $n \geq 4$. 
			\end{itemize}
\end{teo}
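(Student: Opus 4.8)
The plan is to track how the four families of words in \eqref{terms} transform under left-multiplication by $P$ and by $QPQ$, and to convert these transformations into linear recurrences for $f(n,k)$. Concretely, for $n\geq 2$ write
\begin{equation*}
(P+QPQ)^n = a_n P + \sum_{k=2}^n b_n(k)(PQ)^k + \sum_{k=2}^n c_n(k)(QP)^k + \sum_{k=2}^{n-1} d_n(k)\,P(QP)^k + \sum_{k=2}^n e_n(k)\,Q(PQ)^k,
\end{equation*}
with nonnegative integer coefficients. Using the trace property, $\tau[(QP)^k]=\tau[(PQ)^k]$, $\tau[P(QP)^k]=\tau[(PQ)^{k+1}]$ and $\tau[Q(PQ)^k]=\tau[(PQ)^k]$, so that $f(n,k)=b_n(k)+c_n(k)+e_n(k)+d_n(k-1)$ (with $d_n(1):=0$), and $a_n=1$ since the only way to produce the letter $P$ alone is by repeatedly multiplying $P$ by $P$. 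The first step is therefore to derive the one-step transition rules for the coefficient vectors $(a_n,b_n,c_n,d_n,e_n)\mapsto(a_{n+1},\dots)$ directly from the four bullet points already verified in the excerpt: left-multiplication by $P$ sends $P\mapsto P$, $(PQ)^k\mapsto (PQ)^k$, $P(QP)^k\mapsto P(QP)^k$, $(QP)^k\mapsto P(QP)^k$ (i.e. feeds $c_n(k)$ into $d_{n+1}(k)$), and $Q(PQ)^k\mapsto (PQ)^{k+1}$; while left-multiplication by $QPQ$ sends $P\mapsto QPQ=Q(PQ)^1$ — but $Q(PQ)^1$ is not in our list, so one must be careful: $QPQ\cdot P = (QP)^2\cdot$? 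No — rather one uses $QPQ\cdot P(QP)^{k}$, $QPQ\cdot(PQ)^k$, etc., exactly as in the bulleted list. Writing all of this out gives a closed linear system.

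Second, I would solve the three easy pieces. For $k=n$ (the ``leading'' coefficient), only the top-degree words $(PQ)^n,(QP)^n,P(QP)^{n-1},Q(PQ)^n$ feed into degree $n+1$, and chasing the transitions shows $f(n,n)$ satisfies a two-term recurrence; checking $f(1,1)=\tau[(P+QPQ)^1]$'s coefficient $=1$ (indeed $\tau(P+QPQ)=\tau(P)+\tau(PQ)$... wait, $\tau(QPQ)=\tau(PQ)$ so $f(1,1)=1$) and $f(2,2)=3$ from the explicit expansion of $(P+QPQ)^2$, pins it down as the stated Lucas sequence. For $k\in\{2,3\}$ one computes $b_n(k),c_n(k),d_n(k-1),e_n(k)$ in closed form — these low-degree words are produced in a controlled way (e.g. $(PQ)^2$ and $(QP)^2$ arise from $P\cdot(PQ)^2$, $QPQ\cdot P$, etc.) — leading to $f(n,k)=n+\delta_{n,k}$; the $\delta_{n,k}$ accounts for the extra top-degree contribution present only at $k=n$. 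The general recurrence for $3\leq k\leq n-1$ then follows by substituting $f(n,k)=b_n(k)+c_n(k)+e_n(k)+d_n(k-1)$ into the transition rules and collecting terms: the combination $f(n-1,k-1)+f(n-1,k)-f(n-2,k-1)+f(n-2,k-2)$ is exactly what emerges after using the one-step rules twice and cancelling the overlap (the $-f(n-2,k-1)$ term being an inclusion–exclusion correction for words counted in both the ``$P\cdot$'' and ``$QPQ\cdot$'' branches).

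The main obstacle, I expect, is the bookkeeping in the middle range: the map $(QP)^k\mapsto P(QP)^k$ mixes the $c$-family into the $d$-family, and $Q(PQ)^k\mapsto(PQ)^{k+1}$ mixes the $e$-family into the $b$-family with a degree shift, so the five coefficient sequences are genuinely coupled and one cannot recurse on $f$ alone without first establishing auxiliary relations among $b_n,c_n,d_n,e_n$ (for instance $b_n(k)=c_n(k)$ by the $P\leftrightarrow Q$ symmetry of $P+QPQ$ after conjugating, or $c_{n}(k)=d_{n}(k)+(\text{lower order})$). The cleanest route is probably to prove by a simultaneous induction on $n$ a full set of formulas/recurrences for all five families, from which the three bulleted statements about $f(n,k)$ drop out; the delicate point is getting the initial-value edge cases right (the third set in \eqref{terms} being empty at $k=2$, and $Q(PQ)^2$ appearing ``only when $n=2$'' as flagged in the excerpt), since these are precisely what create the $\delta_{n,k}$ and the special values $f(2,2)=3$, $f(1,1)=1$.
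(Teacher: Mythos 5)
Your strategy --- expanding $(P+QPQ)^n$ over the word families $P$, $(PQ)^k$, $(QP)^k$, $P(QP)^k$, $Q(PQ)^k$ and tracking their multiplicities under left multiplication by $P$ and by $QPQ$ --- is essentially the paper's (Lemmas \ref{a+b} and \ref{sys}). But there is a concrete error at the base of your bookkeeping: the identity $\tau[P(QP)^k]=\tau[(PQ)^{k+1}]$ is false. Since $P(QP)^k=(PQ)^kP$ and $P^2=P$, the trace property gives $\tau[P(QP)^k]=\tau[P(PQ)^k]=\tau[(PQ)^k]$, with no degree shift. Hence your decomposition $f(n,k)=b_n(k)+c_n(k)+e_n(k)+d_n(k-1)$ must be replaced by $f(n,k)=b_n(k)+c_n(k)+e_n(k)+d_n(k)$. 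This is not cosmetic: applied to the displayed expansion of $(P+QPQ)^3$, your formula gives $f(3,2)=2$ and $f(3,3)=5$ instead of the correct values $f(3,2)=3$ and $f(3,3)=4$, so every recurrence you would subsequently ``collect'' inherits the shift.

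Beyond this, the steps that carry the actual content are asserted rather than carried out. You rightly note that the five multiplicity sequences are coupled and that one cannot recurse on $f$ alone, but you never produce the closure identity that makes the system autonomous. In the paper this is $a(n+1,k)+b(n+1,k+1)=f(n,k)$ (where $a$ and $b$ count $P(QP)^k$ and $Q(PQ)^k$), obtained by adding the one-step rules $a(n+1,k)=a(n,k)+d(n,k)$ and $b(n+1,k+1)=b(n,k)+c(n,k)$; it is exactly what converts the mixed system of Lemma \ref{sys}, which expresses $f(n,k)$ through $f(n-1,\cdot)$ together with $a(n-1,\cdot)$ and $b(n-1,\cdot)$, into the stated autonomous recurrence. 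Your reading of the term $-f(n-2,k-1)$ as an ``inclusion--exclusion correction for words counted in both branches'' is not the right mechanism: the $P\cdot$ and $QPQ\cdot$ branches produce disjoint words (one set starts with $P$, the other with $Q$); the negative term instead records the words $Q(PQ)^k$ and $P(QP)^{k-1}$ whose trace level jumps by two rather than one under the respective multiplications, and its identification with $f(n-2,k-1)$ is precisely the closure identity above. The $k\in\{2,3\}$ and $k=n$ cases likewise require the explicit auxiliary recursions and boundary values (such as $a(n,1)=0$ and the small-$n$ values of $b$), none of which are derived in your plan.
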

The proof of this theorem relies on the two lemmas proved below.   
\begin{lem}\label{a+b}
Denote $a(n,k), b(n,k), c(n,k), d(n,k)$ the cardinalities of 
\begin{equation*}
\{P(QP)^k\}_{k=2}^{n-1}, \quad \{Q(PQ)^k\}_{k=2}^n, \quad \{(PQ)^k\}_{k=2}^n, \quad \{(QP)^k\}_{k=2}^n, \quad ,
\end{equation*}
respectively. Then 
\begin{eqnarray*}
a(n,k) & = & a(n-1,k) + d(n-1,k) \\ 
b(n,k) & = & b(n-1,k-1) + c(n-1,k-1)\\
c(n,k) & = & c(n-1,k) + b(n-1,k-1) \\ 
d(n,k) & = & d(n-1,k-1) + a(n-1,k-2).
\end{eqnarray*}
Consequently,
\begin{equation}\label{RelationF}
a(n+1,k) + b(n+1,k+1) = f(n,k). 
\end{equation}
\end{lem}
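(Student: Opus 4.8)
The plan is to track how each of the four families of words in \eqref{terms} propagates when we multiply $(P+QPQ)^n$ by $(P+QPQ)$ on the left, exactly as in the bulleted induction that established the claim about which factors appear. Writing $(P+QPQ)^{n+1} = P\cdot(P+QPQ)^n + QPQ\cdot(P+QPQ)^n$, I would go term by term: multiplication by $P$ on the left fixes $P$, $(PQ)^k$, and $P(QP)^k$; it kills $(QP)^k$ by turning it into $P(QP)^k$ wait—more precisely $P(QP)^k = P(QP)^k$ stays in the $a$-family, and $PQ(PQ)^{k}\cdot$ etc. The bookkeeping is: $P\cdot(QP)^k = P(QP)^k$ contributes to $a(n+1,\cdot)$; $P\cdot Q(PQ)^k$—careful, $PQ(PQ)^k = (PQ)^{k+1}$ contributes to $c$; and so on, matching precisely the four displayed recurrences for $a,b,c,d$. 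Each recurrence is then just a careful reading of which monomials on the left in $\{P,QPQ\}$ send a given word in one family to a word in another, subject only to $P^2=P$, $Q^2=Q$.

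Concretely, I would first re-derive the four recurrences for $a(n,k),b(n,k),c(n,k),d(n,k)$ by expanding $(P+QPQ)(P+QPQ)^n$ and collecting coefficients, using the four transformation rules already spelled out in the excerpt: $P$ fixes $P$, $(PQ)^k$, $P(QP)^k$; $(PQ)^{k+1}=P\cdot Q(PQ)^k$; $P(QP)^k = P\cdot(QP)^k$; $QPQ\cdot(PQ)^k = Q(PQ)^{k+1}$; $QPQ\cdot(QP)^k=(QP)^{k+1}$; $QPQ\cdot(QP)^2$—no wait, $(QP)^2 = QPQ\cdot P$ is the $n=2$ seed. This gives $a(n,k)=a(n-1,k)+d(n-1,k)$ (a $P(QP)^k$ arises either from multiplying an existing $P(QP)^k$ by $P$, or from $P\cdot(QP)^k$), $d(n,k)=d(n-1,k-1)+a(n-1,k-2)$ (a $(QP)^k$ arises from $QPQ\cdot(QP)^{k-1}$ or from $QPQ\cdot P(QP)^{k-2} = (QP)^{k-1}$—I must double-check the index shift here), and similarly for $b,c$. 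This is the routine part.

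The substantive step is the ``Consequently'' claim \eqref{RelationF}: $a(n+1,k)+b(n+1,k+1)=f(n,k)$. Here the key observation is the trace identity that makes $P(QP)^k$ and $Q(PQ)^{k+1}$ both reduce, under $\tau$, to $\tau[(PQ)^{k+1}]$ — indeed $\tau[P(QP)^k] = \tau[(PQ)^k P] = \tau[(PQ)^k] $ wait, that is $\tau[(PQ)^k]$, not $\tau[(PQ)^{k+1}]$; and $\tau[Q(PQ)^{k+1}] = \tau[(PQ)^{k+1}Q]=\tau[(QPQ)(PQ)^k]$ hmm. I need to be careful: under the trace, $\tau[(PQ)^k]=\tau[(QP)^k]=\tau[P(QP)^{k-1}]=\tau[Q(PQ)^{k-1}]=\tau(PQ)^k$ in the notation of the paper where $\tau(PQ)^k$ abbreviates $\tau[(PQ)^k]$. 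So when we compute $\tau[(P+QPQ)^{n+1}]$ by summing $\tau$ over all the monomials in \eqref{terms} weighted by their multiplicities, every monomial in the $a$-family with parameter $k$ contributes $\tau[(PQ)^{k}]$ (since $\tau[P(QP)^k]=\tau[(QP)^k P]=\tau[(PQ)^{k}]$—no: $(QP)^kP = (QP)^k P$, and $\tau[(QP)^kP]=\tau[P(QP)^k]=\tau[(PQ)^kP]$... the point is $\tau[P(QP)^k]=\tau[(PQ)^k]$), while the $b$-family monomial with parameter $k+1$ contributes $\tau[Q(PQ)^{k+1}]=\tau[(PQ)^{k+1}]$... so the matching of indices between $a(n+1,k)$ and $b(n+1,k+1)$ is exactly what makes both land on the coefficient of $\tau(PQ)^{k+1}$ in $\tau[(P+QPQ)^{n+1}]$, hence on $f(n+1,k+1)$, which by a shift is what relates to $f(n,k)$ via the $c,d$ recurrences; I will pin down the precise index matching when writing the details.

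The main obstacle I anticipate is precisely this index-tracking: verifying that under $\tau$ the words $P(QP)^k$, $(PQ)^{k}$, $(QP)^{k}$, $Q(PQ)^{k}$ that survive in $(P+QPQ)^{n+1}$ all collapse to the appropriate power $\tau[(PQ)^{\ell}]$, and that the four multiplicities $a,b,c,d$ at the shifted indices sum correctly to reproduce $f(n+1,\ell)$, so that isolating the ``boundary'' contributions gives \eqref{RelationF} rather than an off-by-one relative. Once the reduction $\tau[P(QP)^k]=\tau[Q(PQ)^k]=\tau[(PQ)^k]=\tau[(QP)^k]$ is used consistently together with the bulleted transformation rules, the identity $a(n+1,k)+b(n+1,k+1)=f(n,k)$ should fall out by comparing the coefficient of $\tau(PQ)^k$ on both sides of the expansion of $\tau[(P+QPQ)^n]$, noting that the $c(n,k)+d(n,k)$ part accounts for the $(PQ)^k,(QP)^k$ contributions while the leftover $a,b$ part is what remains; I would organize the final write-up around that coefficient comparison.
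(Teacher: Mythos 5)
Your derivation of the four recurrences follows the paper's route: each word type at level $n$ arises in exactly two ways from level $n-1$ under left multiplication by $P$ or $QPQ$, via the factorizations $P(QP)^k=P[(QP)^k]$, $Q(PQ)^k=QPQ[(PQ)^{k-1}]=QPQ[Q(PQ)^{k-1}]$, $(PQ)^k=P[Q(PQ)^{k-1}]$, $(QP)^k=QPQ[(QP)^{k-1}]=QPQ[P(QP)^{k-2}]$. Your slip $QPQ\cdot P(QP)^{k-2}=(QP)^{k-1}$ should read $(QP)^{k}$, but you flagged it and the index shifts you wrote in the recurrences are the correct ones.

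The genuine problem is your proposed mechanism for the ``Consequently'' step \eqref{RelationF}. You argue that $a(n+1,k)$ and $b(n+1,k+1)$ ``both land on the coefficient of $\tau(PQ)^{k+1}$'' in $\tau[(P+QPQ)^{n+1}]$. This is false: since $\tau[P(QP)^k]=\tau[(QP)^kP]=\tau[(PQ)^k]$, the $a$-family with parameter $k$ contributes to the coefficient $f(n+1,k)$ of $\tau[(PQ)^{k}]$, while the $b$-family with parameter $k+1$ contributes to $f(n+1,k+1)$ --- two \emph{different} coefficients, so no cancellation or matching of the kind you describe occurs, and the route through $f(n+1,k+1)$ ``by a shift'' does not close. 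The identity \eqref{RelationF} is instead purely formal and needs no coefficient comparison at level $n+1$ at all: the trace collapse $\tau[P(QP)^k]=\tau[Q(PQ)^k]=\tau[(PQ)^k]=\tau[(QP)^k]$ (which you do state correctly) gives
\begin{equation*}
f(n,k)=a(n,k)+b(n,k)+c(n,k)+d(n,k),
\end{equation*}
and then adding the first recurrence evaluated at $(n+1,k)$ to the second evaluated at $(n+1,k+1)$ yields
\begin{equation*}
a(n+1,k)+b(n+1,k+1)=\bigl[a(n,k)+d(n,k)\bigr]+\bigl[b(n,k)+c(n,k)\bigr]=f(n,k).
\end{equation*}
Replace the ``index matching under the trace at level $n+1$'' argument with this two-line substitution and the proof is complete.
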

\begin{proof}
The recurrence relations follow readily from the identities: 
\begin{eqnarray*}
P(QP)^k &=& P[P(QP)^k]  =  P[(QP)^k] , \\
Q(PQ)^k &=& QPQ[Q(PQ)^{k-1}] = QPQ[(PQ)^{k-1}],\\
(PQ)^k &=& P(PQ)^k = P[Q(PQ)^{k-1}], \\ 
(QP)^k &=& QPQ([QP)^{k-1}] =  QPQ[P(QP)^{k-2}].
\end{eqnarray*}
As to \eqref{RelationF}, it suffices to notice that 
\begin{align*}
f(n,k) &= a(n,k) + b(n,k) + c(n,k) + d(n,k) 
\end{align*}
and that the sum of the two last recurrence relations is $a(n+1,k) + b(n+1,k+1) = a(n,k) + d(n,k) + b(n,k) + c(n,k)$. 
\end{proof}

Now, we state and prove the second needed lemma.
\begin{lem}\label{sys}
	The family $(f(n,k))_{2\le k\le n}$ satisfies:
	\begin{itemize}
	\item If $n \geq 2$ then 
	\begin{equation*}
		f(n,n)=f(n-1,n-1)+a(n-1,n-2)+b(n-1,n-1).
	\end{equation*}
	\item If $k=2, n \geq 3,$ then 
	\begin{equation*}
		f(n,2)=f(n-1,2)-b(n-1,2)+1.
	\end{equation*}
	\item If $k=3, n \geq 4,$ then 
	\begin{equation*}
		f(n,3)=f(n-1,2)+f(n-1,3)-b(n-1,3)-a(n-1,2)+b(n-1,2).
	\end{equation*}
	\item For any $n \geq 5, k\in\{4,\ldots,n-1\}$,
	\begin{equation*}
		f(n,k)=f(n-1,k-1)+f(n-1,k)-b(n-1,k)-a(n-1,k-1)+a(n-1,k-2)+b(n-1,k-1).
	\end{equation*}
	\end{itemize}
\end{lem}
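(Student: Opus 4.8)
\textbf{Proof proposal for Lemma \ref{sys}.}

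The plan is to expand $(P+QPQ)^{n}$ one step further than in the analysis preceding Theorem \ref{RecRel} and track how each of the four families of words in \eqref{terms} contributes to the coefficient $f(n,k)$ of $\tau(PQ)^k$. By the claim established above, $(P+QPQ)^{n}=(P+QPQ)(P+QPQ)^{n-1}$, and on the right-hand side the left multiplications by $P$ and by $QPQ$ act on the words $P$, $\{(PQ)^k,(QP)^k\}$, $\{P(QP)^k\}$, $\{Q(PQ)^k\}$ exactly as recorded in the four bullet points of that analysis (and in the identities used in Lemma \ref{a+b}). The first step is therefore purely bookkeeping: for a fixed target exponent $k$, collect all words on the right-hand side that, after applying the trace property, reduce to $\tau[(PQ)^k]$. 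Under $\tau$ one has $\tau[(PQ)^k]=\tau[(QP)^k]=\tau[P(QP)^{k-1}]=\tau[Q(PQ)^{k-1}]$ — that is, a word of ``$PQ$-length'' $k$ in any of the four families contributes $\tau(PQ)^k$ — so the crucial point is to decide, for each way a word of the relevant length can be produced, which family it lands in and with which length, because the map $k\mapsto k$ shifts for some families (multiplication by $QPQ$ raises the $(PQ)$-count, while left multiplication by $P$ often preserves it).

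The second step is to assemble these contributions into the stated recursions, case by case on $k$. For the generic range $k\in\{4,\dots,n-1\}$: $P\cdot(PQ)^k$ and $P\cdot P(QP)^{k}$ contribute length-$k$ words (accounting for $c(n-1,k)$-type and $a(n-1,k)$-type terms), $QPQ\cdot(QP)^{k-1}$ and $QPQ\cdot P(QP)^{k-2}$ and $QPQ\cdot Q(PQ)^{k-1}$ raise a length-$(k-1)$ word to length $k$, while $QPQ\cdot(PQ)^{k}$ and $QPQ\cdot Q(PQ)^k$ would overshoot and must be excluded. Carefully summing $a(n-1,\cdot)+b(n-1,\cdot)+c(n-1,\cdot)+d(n-1,\cdot)$ over the indices that feed into length $k$, and using $f(n-1,k-1)=a(n-1,k-1)+b(n-1,k-1)+c(n-1,k-1)+d(n-1,k-1)$ together with $f(n-1,k)=a(n-1,k)+b(n-1,k)+c(n-1,k)+d(n-1,k)$, one should land on
\begin{equation*}
f(n,k)=f(n-1,k-1)+f(n-1,k)-b(n-1,k)-a(n-1,k-1)+a(n-1,k-2)+b(n-1,k-1),
\end{equation*}
the correction terms $-b(n-1,k)-a(n-1,k-1)+a(n-1,k-2)+b(n-1,k-1)$ being precisely the mismatch between the naive sum $f(n-1,k-1)+f(n-1,k)$ and the words that actually survive (some $a$- and $b$-type words are killed by the constraints $k\le n-1$ in $\{P(QP)^k\}$ and $k\ge 2$ everywhere, or get reabsorbed via the identities $(QP)^2=[QPQ]P$ and $Q(PQ)^2=[QPQ][QPQ]$). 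The cases $k=n$, $k=3$, and $k=2$ are the boundary of this same computation: for $k=n$ only the length-raising moves survive, giving $f(n,n)=f(n-1,n-1)+a(n-1,n-2)+b(n-1,n-1)$; for $k=2$ and $k=3$ the family $\{P(QP)^k\}$ starts at $k=2$ (empty below) and the special reductions $(QP)^2=[QPQ]P$, $Q(PQ)^2=[QPQ]^2$ alter the count, producing the ``$+1$'' in $f(n,2)=f(n-1,2)-b(n-1,2)+1$ (the ``$+1$'' coming from the single new word $(PQ)^2=P[Q(PQ)]$ with $Q(PQ)$ of length $1$, i.e. from the $P\cdot QPQ$ cross term hitting $P$ itself) and the mixed right-hand side for $f(n,3)$.

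The main obstacle I expect is the boundary bookkeeping, not the generic recursion: one has to be scrupulous about (i) which words are identically zero or get rewritten via $(\mathbf{1}+S)(\mathbf{1}-S)=0$-type collapses, equivalently via $P^2=P$, so that no word is double-counted; (ii) the empty families at small $k$ (the set $\{P(QP)^k\}$ is void for $k\le 1$ and ``empty for $k=2$'' in the degenerate order-$2$ case, as flagged after \eqref{terms}); and (iii) correctly attributing the lone extra unit in the $k=2$ relation and the asymmetric shape of the $k=3$ relation to the reductions $(QP)^2=[QPQ]P$ and $Q(PQ)^2=[QPQ][QPQ]$. Once the contributions are sorted per family and per length, each of the four displayed identities follows by adding the relevant instances of the recursions of Lemma \ref{a+b} and regrouping via $f(m,j)=a(m,j)+b(m,j)+c(m,j)+d(m,j)$; I would present the generic case $k\in\{4,\dots,n-1\}$ in full and then indicate the three boundary modifications, since they share the same mechanism.
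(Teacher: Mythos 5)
Your overall strategy---peeling off one factor of $(P+QPQ)$ and tracking how left multiplication by $P$ and by $QPQ$ maps each of the four word families into the others---is exactly the paper's: the paper splits $\tau[(P+QPQ)^{n+1}]$ into $\tau[P(P+QPQ)^{n}]+\tau[QPQ(P+QPQ)^{n}]$, introduces two coefficient arrays $g(n+1,k)$ and $h(n+1,k)$ for the two halves, computes each from the word-level identities, and adds them. However, your write-up stops short of the computation that constitutes the entire content of the lemma: the sentence ``one should land on [the stated formula], the correction terms being precisely the mismatch'' asserts the conclusion rather than deriving it. Since the whole point of Lemma \ref{sys} is to pin down those correction terms exactly, this is a genuine gap rather than a stylistic omission.

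Moreover, the details you do supply would derail the computation if followed literally. First, your trace identity is off by one: the correct statements are $\tau[P(QP)^k]=\tau[(QP)^k]=\tau[(PQ)^k]$ and $\tau[Q(PQ)^k]=\tau[(PQ)^k]$ (trace property plus $P^2=P$, $Q^2=Q$), not $\tau[P(QP)^{k-1}]=\tau[(PQ)^k]$; the decomposition $f(n,k)=a(n,k)+b(n,k)+c(n,k)+d(n,k)$ used to regroup into $f(n-1,k)$ and $f(n-1,k-1)$ only works with the correct indexing. Second, your enumeration of the products feeding a target exponent $k$ is incomplete: you omit $P\cdot(QP)^k=P(QP)^k$, $P\cdot Q(PQ)^{k-1}=(PQ)^k$ and $QPQ\cdot(PQ)^{k-1}=Q(PQ)^k$, which are precisely the $d(n-1,k)$, $b(n-1,k-1)$ and $c(n-1,k-1)$ contributions appearing in Lemma \ref{a+b}; without them the sum cannot reassemble into $f(n-1,k)+f(n-1,k-1)$ plus the stated corrections. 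Third, the ``$+1$'' in the $k=2$ relation comes from $QPQ\cdot P=(QP)^2$, i.e.\ the factor $QPQ$ hitting the singleton word $P$ (whose image has trace $\tau[(PQ)^2]$), not from ``$P\cdot QPQ$'': for $n\ge3$ the word $Q(PQ)^{1}=QPQ$ does not occur in the expansion of $(P+QPQ)^{n-1}$, so the product you invoke is not available. Repairing these three points and then actually carrying out the bookkeeping---most cleanly via the intermediate arrays $g$ and $h$ as the paper does---is what produces the four displayed identities.
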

\begin{proof}
	We proceed by induction on $n \geq 2$. The case $n=2$ is readily checked from 
	\begin{equation*}
	f(2,2) = 3, \quad f(1,1) = a(1,0) = b(1,1) = 1. 
	\end{equation*}
	Next, assume that the relations above hold true up to order $n$ and write: 
		\begin{align*}
		\tau[(P+QPQ)^{n+1}]=\tau[P(P+QPQ)^{n}]+\tau[QPQ(P+QPQ)^{n}]. 
	\end{align*}
	Setting
	
		\begin{equation*}
		\tau[P(P+QPQ)^n] = \tau(P) + \sum_{k=2}^n g(n+1,k) \tau(PQ)^k, \quad n \geq 2,
	\end{equation*}
	and
	\begin{equation*}
		\tau[QPQ(P+QPQ)^n] =  \sum_{k=2}^n h(n+1,k) \tau(PQ)^k, \quad n \geq 2,
	\end{equation*}
	then it follows that 
	\begin{equation}\label{Sum0}
		f(n+1,k) = g(n+1,k) + h(n+1,k), \quad k \in \{2,\ldots,n+1\}.
	\end{equation}
	Now it is clear that the contribution of $ (PQ)^k,(QP)^k, P(QP)^k$ remains invariant by multiplication to the left by $P$, while the contribution of $Q(PQ)^k$ becomes $\tau(QP)^{k+1}$. Consequently, we readily get: 
	\begin{equation}\label{Recg}
		g(n+1,k)=\begin{cases}
			f(n,2)-b(n,2),& k=2, \\
			f(n,k)-b(n,k)+b(n,k-1),& 3\le k\le n, \\
			b(n,n)& k=n+1.
		\end{cases}
	\end{equation}
	
	On the other hand, the trace property of $\tau$ yields: 
	\begin{eqnarray*}
		\tau[(QPQ)P^n ]&=& \tau[(PQ)^{2}], \\ 
		\tau[(QPQ)(PQ)^k ]&=& \tau[(PQ)^{k+1}], \\ 
		\tau[(QPQ)(QP)^k ]&=& \tau[(PQ)^{k+1}] , \\
		\tau[(QPQ)Q(PQ)^k ]&=& \tau[(PQ)^{k+1}],\\
		\tau[(QPQ)P(QP)^k ]&=& \tau[(PQ)^{k+2}] 
	\end{eqnarray*}
	whence
	\begin{equation}\label{Rech}
		h(n+1,k)=\begin{cases}
			1, &k=2\\
			f(n,2)-a(n,2), &k=3,\\
			f(n,k-1)-a(n,k-1)+a(n,k-2),& 4\le k\le n, \\
			f(n,n)+a(n,n-1)& k=n+1.
		\end{cases}
	\end{equation}
	Combining \eqref{Sum0}, \eqref{Recg} and \eqref{Rech}, we end up with: 
	\begin{equation*}
		f(n+1,k)=\begin{cases}
			f(n,k)-b(n,k)+1, &k=2\\
			f(n,2)+f(n,3)-b(n,3)-a(n,2)+b(n,2), &k=3\\
			f(n,k-1)+f(n,k)-b(n,k)-a(n,k-1)+a(n,k-2)+b(n,k-1),& 4\le k\le n\\
			f(n,n)+a(n,n-1)+b(n,n)& k=n+1,
		\end{cases}
	\end{equation*}
	as desired. 
	\end{proof}
	
\begin{rem}
The initial values are readily read from the above expansions. For instance,
	\begin{equation*}
	a(2,0) =1, \quad b(2,1)  = 0,   \quad a(2,2) = 0, \quad	b(2,2) =1.
	\end{equation*}
	\end{rem}	
We are now ready to prove Theorem \ref{RecRel}.
\begin{proof}[Proof of Theorem \ref{RecRel}]
	Combining Lemma \ref{a+b} and Lemma \ref{sys}, we readily get
	\begin{align*}
		f(n,n) &= f(n-1,n-1)+a(n-1,n-2)+b(n-1,n-1)
		\\& =f(n-1,n-1)+f(n-2,n-2), \quad n \geq 3. 
	\end{align*}
	Furthermore, Lemma \ref{a+b} entails 
	\begin{equation*}
	a(n-1,2) + b(n-1,3) = f(n-2, 2), 
	\end{equation*}
	and 
	\begin{equation*}
	b(n-1,2) = b(n-1,2) + a(n-1,1) = f(n-2,1).
	\end{equation*}
	since $a(n-1,1) = 0, n \geq 3$. As a result, 
	\begin{equation}\label{k=3}
		f(n,3)=f(n-1,2)+f(n-1,3)-f(n-2,2) + f(n-2,1),  \quad n\ge 3,
	\end{equation}
	and similarly 
	\begin{equation}\label{k=2}
		f(n,2)=f(n-1,2)- f(n-2,1) +1, \quad n\ge3.
	\end{equation}
	But it is easy to see that $f(n-2,1) = \delta_{n3}$ whence we infer 
	\begin{eqnarray*}
	f(3,3) & = & f(2,2) + f(1,1) = 4, \\ 
	f(n,3)&=& f(n-1,3) + f(n-1,2)-f(n-2,2), \quad n \geq 4, \\ 
	f(3,2) & = & f(2,2) = 3, \\
	f(n,2) & =& f(n-1, 2) + 1, \quad n \geq 4.
	\end{eqnarray*} 
	Consequently, $f(n,2) = n + \delta_{n2}, n \geq 2$ which in turn implies 
	\begin{equation*}
	f(n,3)= f(n-1,3) + 1 - \delta_{n4} \quad n \geq 4,
	\end{equation*}
	and leads to the expression of $f(n,3)$. 
	 Finally, if $n \geq 4$ and $k\in\{3,\ldots,n-1\}$ then  Lemma \ref{a+b} and Lemma \ref{sys} give the relation: 
	\begin{equation*}
		f(n,k)=f(n-1,k-1)+f(n-1,k)-f(n-2,k-1)+f(n-2,k-2),
	\end{equation*}
	which ends the proof of the theorem.
\end{proof}
\begin{rem}
Let $n \geq 3$. If we take into account the values 
\begin{equation*}
f(n-2, 0) = 1, n \geq 3, \quad  f(n-2,1) = 0, n \geq 4, \quad f(1,1) = 1, 
\end{equation*}
and the fact that $f(n,k) = 0$ whenever $k > n$ then we get the single recurrence relation: 
\begin{equation*}
		f(n,k)= f(n-1,k) + f(n-1,k-1)-f(n-2,k-1)+f(n-2,k-2),
	\end{equation*}
	for any $2 \leq k \leq n$. 
We can convert it into a generating series: if 
\begin{equation*}
G(z,w) := \sum_{n \geq 3}\sum_{k=2}^n f(n,k) z^nw^k, 
\end{equation*}
in a neighborhood of $(0,0)$, then lengthy but routine computations yield the expression: 
\begin{equation*}
G(z,w)[1-z-zw+z^2w-z^2w^2] = 3z^3w^2[1-z+zw] + \frac{z^3w^2(3-2z)}{1-z}.
\end{equation*}
\end{rem}
\begin{rem}
The Lucas sequence admits the following expression: 
\begin{equation*}
f(n,n) = \left(\frac{1+\sqrt{5}}{2}\right)^n +  \left(\frac{1-\sqrt{5}}{2}\right)^n, \quad n \geq 1.
\end{equation*}
\end{rem}
\begin{rem}
One may further compute 
\begin{equation*}
f(n,4) = \frac{n(n-1)}{2} + \delta_{n4} = \binom{n}{2} + \delta_{n4}, \quad n \geq 4,
\end{equation*}
and think that there is a single expression of $f(n,k)$ for any $2 \leq k \leq n$. However, we believe this is not true since for instance 
\begin{equation*}
f(n,5) = n^2 - 4n + 6
\end{equation*} 
is irreducible over $\mathbb{R}$.
\end{rem}	
%The initial values are readily read from the above expansions. For instance,
%\begin{equation*}
%a(2,2) = b(2,2) = d(2,2) =1, \quad c(2,2) = 0,
%\end{equation*}
%and 
%\begin{equation*}
%a(2,1) = b(2,1) = d(2,1)  = 0, \quad c(2,0) =1. 
%\end{equation*}
%With these relations in hands, we can prove that 
%\begin{equation*}
%f(n,k) = n + \delta_{nk}, \quad k \in \{2,3\}, n \geq k. 
%\end{equation*}
%This is in agreement with the first few moments: 
%\begin{equation*}
%\tau(P+QPQ)^2 = \tau(P) + 3\tau[(PQ)^2], 
%\end{equation*}
%\begin{align*}
%\tau[(P+QPQ)^3] & = \tau(P) + 3\tau[(PQ)^2] + 4\tau[(PQ)^3], 
%\end{align*}
%\begin{align*}
%\tau[(P+QPQ)^4] & = \tau(P) + 4\tau[(PQ)^2] + 4\tau[(PQ)^3] + 7\tau[(PQ)^4]. 
%\end{align*}
\section{Kato's Dual pair} 
Given a pair $(P,Q)$ of two orthogonal projections, its Kato's dual $(A,B)$ is defined by: 
\begin{equation*}
A = P-Q, \quad B = {\bf 1} - (P+Q). 
\end{equation*}
The importance of this pair stems from the following relations: 
\begin{equation*}
A^2 + B^2 = {\bf 1}, \quad AB+BA = 0,
\end{equation*}
and from the fact that $B^2$ (and so $A^2$) commutes with $P$ and $Q$ since 
\begin{align}\label{Square}
B^2 = ({\bf 1}-P)({\bf 1} - Q) + QP = ({\bf 1} - Q)({\bf 1}-P) + PQ.
\end{align}
In particular, $B^2P=PB^2 = PQP$ is the angle operator and similarly $B^2Q = QB^2 = QPQ$. Note also that 
\begin{equation*}
B+A = {\bf 1} - 2Q = -S, \quad B-A = {\bf 1} - 2P = -R,
\end{equation*}
so that polynomials in $(A,B)$ are also polynomials in $(R-S, R+S)$.  
Now, \eqref{Eq1} may be written as 
\begin{equation}\label{Eq11}
\tau[B^{2j}] = 2\tau[(PQP)^j] - \frac{\alpha+\beta}{2} = 2\tau[(B^2P)^j] + \tau(B) 
\end{equation}
Note that \eqref{Square} and induction yield the following expressions:
\begin{eqnarray*}
B^{2j} & = & (QP)^j + (({\bf 1}-P)({\bf 1} - Q))^j, \\ 
B^{2j+1} &= &  (({\bf 1}-P)({\bf 1} - Q))^j({\bf 1}-P) - (QP)^jQ.
\end{eqnarray*}
Together with \eqref{Eq11} and the trace property, they imply \eqref{Identity-Diff} which may be written: 
\begin{equation*}
\tau[ (({\bf 1}-P)({\bf 1} - Q))^j] = \tau[(QP)^j] +  \tau(B), 
\end{equation*}
or equivalently 
\begin{equation*}
\tau[B^{2j+1}] =  \tau(B), \quad j \geq 0.
\end{equation*}
Substituting $P \rightarrow {\bf 1} - P$, we similarly get $\tau[A^{2j+1}] =  \tau(A)$ for any $j \geq 0$. This constancy of the odd moments of $A$ and $B$ reminds Theorem 4.1 in \cite{ASS} on the index of a pair of orthogonal projections.  

As to the commutator $C = \sqrt{-1}(PQ-QP)$, it can be written as 
\begin{equation*}
C = \sqrt{-1}(P-Q)(P+Q-{\bf 1}) = -\sqrt{-1}AB =  \frac{\sqrt{-1}}{4}(R-S)(R+S). 
\end{equation*}
Since $A$ and $B$ anti-commute, then we can show that $\tau(C^{2j+1}) = 0, j \geq 0$ while $C^2 = A^2B^2 = B^2({\bf 1} -B^2)$. Combined with Theorem \ref{Th2} and recalling $B^2P = PQP$, we arrive at the following identity: for any $j \geq 1$,
\begin{equation*}
\tau[B^{2j}({\bf 1}-B^2)^j] = 2\tau[(B^2P({\bf 1}-B^2)P)^j] = 2\tau[B^{2j}({\bf 1}-B^2)^jP].
\end{equation*}
Writing 
\begin{equation*}
\tau[B^{2j}({\bf 1}-B^2)^j] = \tau[B^{2j}({\bf 1}-B^2)^jP] + \tau[B^{2j}({\bf 1}-B^2)^j({\bf 1}-P)], 
\end{equation*}
the last identity is equivalent to the following one:
\begin{equation*}
\tau[B^{2j}({\bf 1}-B^2)^jP] = \tau[B^{2j}({\bf 1}-B^2)^j({\bf 1}-P)]
\end{equation*} 
for any $j \geq 1$. It would be interesting to find out any interpretation of this identity by means of the index of $(P,Q)$. 


\begin{thebibliography}{99}
\bibitem{AHT}\emph{W. N. Anderson, Jr, E. J. Harner, G. E. Trapp.} eigenvalues of the difference and product of projections. {\it Linear and Multilinear Algebra}. {\bf 17}, no. 3-4,  (1985), 295-299. 
\bibitem{Aub}\emph{Guillaume Aubrun}. Principal angles between random subspaces and polynomials in two free projections. {\it Confluentes Mathematici}. {\bf 13}, (2021), no. 2, 3-10. 
\bibitem{ASS0}\emph{J. Avron, R. Seiler, B. Simon}. Quantum Hall effect and the relative index for projections. {\it Phys. Rev. Lett}. {\bf 65}, (1990), no. 17, 220-237. 
\bibitem{ASS}\emph{J. Avron, R. Seiler, B. Simon}. The index of a pair of projections. {\it J. Func. Anal}. {\bf 120}, (1994), 220-237. 
\bibitem{Bak-Tre0}\emph{O. M. Baksalary, G. Trenkler}. Revisitation of the product of two orthogonal projectors. {\it Linear Algebra and its Applications}.  {\bf 431}, (2009), 2813-2833.
\bibitem{Bak-Tre1}\emph{O. M. Baksalary, G. Trenkler}. Eigenvalues of functions of orthogonal projectors. {\it Linear Algebra and its Applications}.  {\bf 431}, (2009), 2172-2186.
\bibitem{BL}\emph{F. Benaych-Georges, T. L\'evy}, Continuous semigroup of notions of independence between the classical and the free one. {\it Ann. Probab}. {\bf 39} (2011), no. 3, 904-938. 
\bibitem{Bia}\emph{P. Biane}. Free Brownian Motion, Free Stochastic Calculus and Random Matrices. {\it Fields Institute Communications, 12, (American Mathematical Society Providence, RI, 1997)}, pp. 1-19.
\bibitem{Che-Lan}\emph{Z. Che, B. Landon}. Local spectral statistics of the addition of random matrices. {\it Probab. Theory Rel. Fields}.  {\bf 175} (2019), no. 1-2, 579-654.
\bibitem{Dav}\emph{C. Davis}. Separation of two linear subspaces. {\it Acta Sci. Math}. {\bf 19} (1958), 172-187.
\bibitem{Dem}\emph{N. Demni}. Free Jacobi processes. {\it J. Theor. Proba}. {\bf 21} (2008), 118-143.
\bibitem{Dem1}\emph{N. Demni}. $\beta$-Jacobi processes. {\it Adv. Pure Appl. Math}. {\bf 1} (2010), no. 3, 325-344.
\bibitem{Dem-Hmi}\emph{N. Demni, T. Hmidi}. Spectral distribution of the free unitary Brownian motion: another approach. {\it S\'eminaire de Probabilit\'es XLIV, 191?206, Lecture Notes in Math}., 2046, Springer, Heidelberg, 2012. 
\bibitem{DHH}\emph{N. Demni, T. Hamdi, T. Hmidi}. Spectral distribution of the free Jacobi process. {\it Indiana Univ. Math. J}. {\bf 61} (2012), 1351-1368.
\bibitem{Gal}\emph{A. Galantai}. Subspaces, angles and pairs of orthogonal projections. {\it Linear and Multilinear Algebra}, {\bf Vol. 56}, No. 3, (2008), 227-260.
\bibitem{Hal}\emph{P. Halmos}. Two subspaces. {\it Trans. Amer. Math. Soc}. {\bf 144}, (1969), 381-389.
\bibitem{Ham1}\emph{T. Hamdi}. Liberation, free mutual information and orbital free entropy.  {\it Nagoya Math. J.}. (2018), 1-27.
%\bibitem{Ham2}\emph{T. Hamdi}. Free mutual information for two projections. {\it Complex Anal. Oper. Theory}. Vol. 12, {\bf 7} (2018), 1697-1705.
\bibitem{Ham3}\emph{T. Hamdi}. Spectral distribution of the free Jacobi process, revisited. {\it Anal. PDE}. Vol. 11 {\bf 8} (2018), 2137-2148.
\bibitem{IU}\emph{M. Izumi, Y. Ueda}. Remarks on free mutual information and orbital free entropy. {\it Nagoya Math. J}. {\bf 220} (2015), 45-66.
\bibitem{Kar}\emph{V. Kargin}. On Eigenvalues of the Sum of Two Random Projections. {\it J. Stat. Phys.} (2012) {\bf 149}, 246-258.
\bibitem{Ner}\emph{Y. A. Neretin}. On Jordan angles and the triangle inequality in Grassmann manifolds. {\it Geom. Dedic.} {\bf 86}, (2001), 81-92.  
\bibitem{NS}\emph{A. Nica and R. Speicher}. Commutators of free random variables. {\it Duke Math. J.}, {\bf 92} (3): 553-592, 1998.
\bibitem{Nic-Spe}\emph{A. Nica, R. Speicher}. {\it Lectures on the combinatorics of free probability. London Mathematical Society Lecture Note Series}, Vol. 335, Cambridge University Press, 2006.
\bibitem{Oml}\emph{M. Omladic}. Spectral of the difference and product of projections. {\it Proc. A. M. S. } {\bf Volume 99}, Number 2, (1987), 317-318. 
\bibitem{Voi}\emph{D. V. Voiculescu}. The analogues of entropy and of Fisher's information measure in free probability theory. VI. Liberation and mutual free information. {\it Adv. Math.} {\bf 146} (1999), 101-166.
\bibitem{VDN}\emph{D. V. Voiculescu, K. J. Dykema, and A. Nica}. Free random variables. {\it CRM Monograph Series, volume 1}. American Mathematical Society, Providence, RI, 1992.
\end{thebibliography}
\end{document}